\renewcommand{\baselinestretch}{1}
\newtheorem{prethm}{{\bf Theorem}}
\newenvironment{thm}{\begin{prethm}{\hspace{-0.5
               em}{\bf.}}}{\end{prethm}}
\newtheorem{prepro}[prethm]{Proposition}
\newtheorem{prelem}[prethm]{Lemma}
\newenvironment{lem}{\begin{prelem}{\hspace{-0.5
               em}{\bf.}}}{\end{prelem}}
\newtheorem{precor}[prethm]{Corollary}
\newtheorem{preque}[prethm]{Question}
\newenvironment{que}{\begin{preque}{\hspace{-0.5
               em}{\bf.}}}{\end{preque}}
\newtheorem{precon}[prethm]{Conjecture}
\newenvironment{con}{\begin{precon}{\hspace{-0.5
               em}{\bf.}}}{\end{precon}}
\newtheorem{preremark}{{\bf Remark}}
\newtheorem{preexample}{{\bf Example}}
\newtheorem{preproblem}{{\bf problem}}
\newtheorem{preproof}{{\bf Proof.}}
\newenvironment{proof}[1]{\begin{preproof}{\rm
               #1}\hfill{$\Box$}}{\end{preproof}}
\renewcommand{\thefootnote}
\begin{document}
\title{Self-Invariant Maximal Subfields and Their Connexion with Some Conjectures in Division Rings}
\author{M. Aaghabali$^{\,\rm a}$, M.H. Bien$^{\,\rm b}$\\
{\footnotesize{\em $^{\rm a}$ School of Mathematics, Statistics and Computer Science, University of Tehran,
Tehran, Iran}}\\
{\footnotesize {\em $^{\rm b}$Faculty of Mathematics and Computer Science,
University of Science, VNU-HCM,
227 Nguyen Van Cu Str.,}}\\
{\footnotesize{\em  Dist. 5, HCM-City, Vietnam}}}

\footnotetext{E-mail Addresses:  $^{\rm a}${\tt mehdi.aaghabali@ut.ac.ir, maghabali@gmail.com}, $^{\rm b}$~{\tt mhbien@hcmus.edu.vn}\\ The authors would like to thank Mehran Motiee for his fruitful discussions and helpful comments. The first author acknowledges the INEF grant 380014. The first author is indebted to the Research Council of University of Tehran for support. The second author was funded by Vietnam National University HoChiMinh City (VNU-HCM) under grant no. C2018-18-03.}
\date{}
\maketitle
\begin{quote}
{\small \hfill{\rule{13.3cm}{.1mm}\hskip2cm} \textbf{Abstract}
Let $D$ be a division algebra with center $F$. A maximal subfield of $D$ is defined to be a field $K\subseteq D$ such that $C_D(K)=K,$ that is, $K$ is its own centralizer in $D.$ A maximal subfield $K$ is said to be self-invariant if it normalises by itself, i.e. $N_{D^*}(K)\cup\{0\}=K.$ This kind of subfields is important because they have strong connexion with most famous Albert's Conjecture (every division ring of prime index is cyclic). In fact, we pose a question that asserts whether every division ring whose all maximal subfields are self-invariant has to be commutative. The positive answer to this question, in finite dimensional case, implies the Albert's Conjecture (see \S2). Although we show the Mal'cev-Neumann division ring demonstrates negative answer in the case of infinite dimensional division rings, but it is still most likely the question receives positive answer if we restrict ourselves to the finite dimensional division rings.  We also have had the opportunity to use the Mal'cev-Neumann structure to answer Conjecture~\ref{c1} below in negative (see \S3). Finally, among other things, we rely on this kind of subfields to present a criteria for a division ring to have finite dimensional subdivision ring (see \S4).
\vspace{1mm} {\renewcommand{\baselinestretch}{1}
\parskip = 0 mm

\noindent{\small {\it AMS Classification}: 17A01, 17A35, 16K40.}}

\noindent{\small {\it Keywords}: Division Ring, Maximal Subfield, Subnormal, $n$-Subnormal, Mal'cev-Neumann Division Ring.}}

\vspace{-3mm}\hfill{\rule{13.3cm}{.1mm}\hskip2cm}
\end{quote}
\section{introduction}
Let $D$ be a division ring with center $F.$  An element $a\in D$ is called \textit{algebraic}   over $F$, if there exists a non-zero polynomial $a_0+a_1x+\dots+a_nx^n$ over $F$ such that $a_0+a_1a+\dots+a_na^n=0.$ If $a\in D$, then $F(a)$ denotes the subfield of $D$  generated by $F$ and $\{a\}$. If $A$ is a subset of $D,$ then $\Delta(A)$ denotes the division ring generated by $\Delta\cup A.$ If $A$ is a subset of $D$ we use $A^*$ to denote $A\setminus\{0\}.$ Also, we denote by ${\rm Char}(D), {\rm dim}_FD$ and ${\rm Aut}(\Delta)$ the characteristic of $D,$ the dimension of $D$ over $F$ and automorphisms of subfield $\Delta,$ respectively.
Let $A,B$ be two subsets of $D.$ We say that $A$ is $B$-\textit{invariant} if $bAb^{-1}\subseteq A,$ for all $b\in B^*.$ Also, by the \textit{normalizer} of $A$ in $B$ we mean the set $N_{B^*}(A)=\{b\in B^*;bAb^{-1}\subseteq A\}.$ For a subset $A\subseteq D$ the \textit{centralizer} of $A$ in $D$ is defined to be the set $C_D(A)=\{d\in D;da=ad {\rm~for~all~}a\in A\}.$ Clearly, $C_D(A)$ is a division subring of $D$ and $N_{D^*}(A)$ is a multiplicative subgroup of $D^*.$ A \textit{maximal subfield} of $D$ is defined to be a field $K\subseteq D$ containing $F$ such that $C_D(K)=K,$ that is, $K$ is its own centralizer in $D.$ Let $G$ be group and $H$ a subgroup of $G$. If there exists a sequence of subgroups $$G=H_0\trianglerighteq H_1\trianglerighteq H_2\trianglerighteq \cdots \trianglerighteq H_n=H,$$ then we say that $H$ is \textit{subnormal} in $G$. Additionally, if $n$ is the smallest number in such all sequences, then $H$ is called \textit{$n$-subnormal} in $G$. Hence, the $0$-subnormal subgroup of $G$ is $G$, every $1$-subnormal subgroups of $G$ is normal in $G$ and every $n$-subnormal subgroup of $G$ is non-normal in $G$ for every $n\ge 2$.

There are wild studies on certain substructures of a division ring including a lot of conjectures on the structural properties arising from these substructures (see \cite{aa1,aa2} and references therein). For a given division ring considering an special property $P,$ say commutativity, algebraicity or other finiteness conditions, it is common to understand whether one can specify a set or a substructure $S,$ such that the property $P$ for $S$ implies the property $P$ for the whole ring. Often times one can even go beyond and ask how the problem will proceed if the condition $P$ over the substructure causes the condition $Q$ over the entire structure.  There are several candidates for set or substructure $S,$ including the generator sets, constructions related to commutators,  normal or subnormal subgroups of the multiplicative group of a division ring, and (skew) symmetric elements in rings with involution, just to be mentioned a few.  The first departure towards these kinds of research was done by Wedderburn in 1905 in which he connected the algebraic properties of $D $ to that of $D^*,$ the multiplicative group of division ring that states all finite division rings are commutative~\cite[p. 203]{lam}. Afterwards, many other authors became interested in this stuff and in particular Kaplansky showed that every division ring whose all elements are periodic modulo the center is commutative. These results were stimulated by Jacobson's famous result, that asserts every division ring algebraic over a finite field is commutative, and were extensively developed by Herstein based on prominent role of ``commutator" structures in division rings. Herstein conjectured in Kaplansky's result, it suffices to examine the periodicity condition for multiplicative commutators rather than playing with all elements. He himself proved the conjecture in certain cases, namely, when the division ring is centrally finite, when the multiplicative commutators have finite order in multiplicative subgroup of division ring, and in the case of division rings with uncountable centers~\cite[p. 210]{lam}. There are other partial answers to this conjecture, however, the general case is still remaind open!

Some of efforts to answer the Herstein's Conjecture led people to try structures generated by the multiplicative commutators, say the group of multiplicative commutators, and in more general setting to examine the conjecture by replacing normal and subnormal subgroups instead of commutator subgroups. During the attempts to prove the conjecture, algebraists reproduced similar statements in different situations applying normal and subnormal subgroups. For example, in 1978, Herstein conjectured that every normal subgroup of the multiplicative subgroup of a division ring that is radical over the center, has to be central~\cite{her1}. He showed that the conjecture is true if division ring is centrally finite, but in general it is still open, however, it has been shown that if $N$ is a subnormal subgroup of $D^*$ that is periodic modulo the center of $D,$ then $N$ is central. The last result is a special type of another absorbing old problem determining how much subnormal subgroups of $D^*$ reflect the multiplicative structure of $D^*.$ In other words, how ``big" subnormal subgroups are in $D^*.$ The most important result concerning the structure of subnormal subgroups was obtained by Stuth in 1964 asserting that (i) If $G$ is a noncentral subnormal subgroup of $D^*$ and $x^G$ is the conjugacy class of the noncentral element $x\in D^*$ in $G,$ then the division subring generated by $x^G$ is $D,$ (ii) Every soluble subnormal subgroup of $D^*$ is central. In recent years,  for a division ring $D,$ there has been renewed interest in the study of normal subgroups of $D^*$ \cite{mai,Pa_Go_17,Pa_Go_Pa_15,Pa_HaDeBi_12}. Herstein and Scott conjectured that every subnormal subgroup of $D^*$ is normal in $D^*$ (see \cite[Page 80]{her}) and it was shown in \cite{Pa_Gr_78} that this conjecture of Herstein and Scott holds for the real quaternion division ring, but not for a finite dimensional division ring $D$ over a $p$-local field. Hence, it is natural to extend all results concerning normal subgroups to subnormal subgroups of $D^*,$ and one can pose ``weak version'' of Herstein-Scott's conjecture as follows:

\begin{con} {\rm \cite[Conjecture A]{Pa_Gr_81}}\label{c1}
	If $D$ is a division ring, then every non-central subnormal subgroup of the multiplicative group $D^*$ of $D$ contains a non-central normal subgroup of $D^*$. 	
\end{con}

The motivation of Conjecture~\ref{c1} is from results on subnormal subgroups of general skew linear groups ${\rm GL}_m(D)$. In detail, it is well known that (or see \cite{Pa_MaAk_98}) if $D$ is a division ring containing at least 5 elements and $m>1$, then every non-central subnormal subgroup of the general skew linear group ${\rm GL}_m(D)$ is normal. However, in the case when $m=1$, that is, ${\rm GL}_1(D)=D^*$, there are division rings $D$ whose multiplicative groups $D^*$ contain non-normal subnormal subgroups \cite{Pa_Gr_78}.  If Conjecture~\ref{c1} holds, then one would extend trivially several results on normal subgroups for subnormal subgroups in division rings (e. g., results in \cite{mai,Pa_Go_17, Pa_Go_Pa_15, Pa_HaDeBi_12}). The Conjecture~\ref{c1} is affirmative in case $D$ is finite dimensional over a $p$-local field with $p\ne 2$ \cite{Pa_Gr_81}.

 In this note, we show that if $D=\Delta((G,\sigma))$ is the Mal'cev-Neumann division ring of a non-cyclic free group $G$ over a division ring $\Delta$ with respect to a group morphism $\sigma : G\to {\rm Aut}(\Delta)$ and if $n$ is a positive integer, then there exists an $n$-subnormal subgroup $N$ of $D^*$ such that for every $0\le \ell<n$, $N$ contains no non-central $\ell$-subnormal subgroup of $D^*$. It means that the answer to Conjecture~\ref{c1} is negative in general (see Theorem~\ref{t5}).

The other substructures in a division ring which one can exploit a lot of information about the entire ring looking after them are  {\it maximal subfields}. For example, the following most well-known result asserts one can obtain the finite dimensionality of a division ring from finite dimensionality of its maximal subfields.

\begin{thm}{\rm \cite[P. 242]{lam}}\label{dim}
If $D$ is a division algebra of degree $n$ over a field $F$ and $K$ is a subfield of $D$ containing $F,$ then ${\rm dim}_FK\le n$. The equality holds if and only if $K$ is a maximal subfield of $D$. Conversely, if $K$ is a maximal subfield of a division ring $D$ such that ${\rm dim}_FK=n<\infty,$ then ${\rm dim}_FD=n^2.$
\end{thm}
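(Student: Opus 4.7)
The plan is to derive both directions from the Double Centralizer Theorem for central simple algebras. For the forward direction, since $D$ is central simple over $F$ of dimension $n^{2}$ and $K\subseteq D$ is a field containing $F$, hence a simple $F$-subalgebra, the Double Centralizer Theorem yields
\[
\dim_{F} K \cdot \dim_{F} C_{D}(K) \;=\; \dim_{F} D \;=\; n^{2}.
\]
Because $K$ is commutative, every element of $K$ centralises $K$, giving $K\subseteq C_{D}(K)$ and thus $\dim_{F} K\le \dim_{F} C_{D}(K)$. Substituting into the displayed identity immediately yields $(\dim_{F}K)^{2}\le n^{2}$, i.e.\ $\dim_{F} K\le n$. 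Moreover equality forces $\dim_{F} K = \dim_{F} C_{D}(K)$, which together with $K\subseteq C_{D}(K)$ is the same as $K = C_{D}(K)$, the defining condition of a maximal subfield.

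For the converse, given a maximal subfield $K$ with $\dim_{F} K = n<\infty$ and $C_{D}(K)=K$, the same Double Centralizer identity read in reverse gives $\dim_{F} D = \dim_{F} K\cdot \dim_{F} C_{D}(K) = n\cdot n = n^{2}$, provided the theorem is applicable, i.e.\ that $D$ is a finite-dimensional central simple algebra. To secure this I would consider the $F$-algebra $A = D\otimes_{F} K$ acting on $D$ via $(d\otimes k)\cdot x = dxk$: this makes $D$ a faithful simple left $A$-module whose commutant $\mathrm{End}_{A}(D)$ equals $K$ (here the hypothesis $C_{D}(K)=K$ enters), and Jacobson's density theorem then produces a dense embedding $A\hookrightarrow \mathrm{End}_{K}(D)$, where $D$ is regarded as a right $K$-vector space.

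The main obstacle is to upgrade this dense embedding into an honest isomorphism $A\cong M_{n}(K)$. The strategy is to verify that $A = D\otimes_{F} K$ is simple (which follows from the center of $D$ being $F$), so the density map is injective, and then to show that the right $K$-dimension $m := \dim_{K} D$ is finite --- a step that reduces to proving each element of $D$ is algebraic over $F$ of degree at most $n$ and invoking Kaplansky's theorem on division rings algebraic of bounded degree over the center. Once $m<\infty$, density upgrades to surjectivity onto $\mathrm{End}_{K}(D)\cong M_{m}(K)$, and comparing $F$-dimensions in $A\cong M_{m}(K)$ together with $\dim_{F} D = mn$ forces $m=n$, whence $\dim_{F} D = n^{2}$ as asserted.
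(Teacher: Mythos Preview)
The paper does not prove this statement; it is quoted verbatim from Lam's textbook as background, so there is no paper-side argument to compare against. I will therefore comment only on the soundness of your proposal.

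Your forward direction is correct and is the standard Double Centralizer argument. For the converse your setup is also the standard one: make $D$ into a left $A$-module for $A=D\otimes_{F}K$, identify $\mathrm{End}_{A}(D)$ with $K$ via $C_{D}(K)=K$, check that $A$ is simple (your parenthetical is right --- the usual ``shortest-length element in a nonzero ideal'' argument needs only $Z(D)=F$, not finite dimension), and invoke density. The gap is in your proposed finish. You plan to obtain $\dim_{K}D<\infty$ by first proving that every $d\in D$ is algebraic over $F$ of degree $\le n$ and then applying Kaplansky's bounded-degree theorem, but you give no mechanism for producing that uniform bound from the sole hypothesis that \emph{one} maximal subfield $K$ has $[K:F]=n$. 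In fact the bound $[F(d):F]\le n$ is a \emph{consequence} of $\dim_{F}D=n^{2}$ (apply the forward direction to $F(d)$), not an independently available input, so the route through Kaplansky is circular as stated.

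The clean way to close the argument avoids Kaplansky entirely: since $[K:F]=n<\infty$, the ring $A=D\otimes_{F}K$ is free of rank $n$ as a left $D$-module, and left ideals of $A$ are in particular left $D$-submodules, so $A$ is left Artinian. Being also simple, Wedderburn--Artin gives $A\cong M_{m}(E)$ outright; comparing with the simple module $D$ and its endomorphism ring forces $E\cong K$ and $D\cong K^{m}$, and a dimension count then yields $m=n$ and $\dim_{F}D=n^{2}$.
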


There are still other evidences to show how one can rely on maximal subfields to deduce further information about the whole division ring \cite{aa3}. In particular, according to the Skolem-Noether Theorem \cite[P. 93]{farb} we know that if $A$ is a centrally finite simple $F$-algebra, then every $F$-automorphism of $A$ is inner. Hence, if $\Delta$ is a division subring of a centrally finite division ring, then every automorphism $\varphi\in{\rm Aut}(\Delta)$ is inner. Thus every element of the form $x^{-1}\varphi(x),$ known as {\it autocommutator}, is nothing else a multiplicative commutator in $D^*.$ By \cite[P. 210]{lam}, it is known that
 \begin{center}{{\it every division ring whose all multiplicative commutators are central is a field}~~~~~~~~~~~~($\dag$)}
  \end{center} Now, assume that $\Delta\subseteq D$ is an extension of division rings. Note that here we have no dimension restrictions on $D.$ We claim that if there exists a non-trivial $\sigma\in {\rm Aut}(D)$ such that $x^{-1}\sigma(x)\in\Delta$ for any $x\in D^*$, then $D=\Delta.$ Indeed, let $H=\{a\in D\mid \sigma(a)=a\}$ be the fixed division subring of $D$ with respect to $\sigma$. One has $H\neq D,$ since $\sigma\neq {\rm id}.$ By a well-known result in linear algebra $D$ could not be equal to a finite union of its proper vector subspaces. So $D=H\cup\Delta,$ implies $D=\Delta.$ We claim that the relation $D=\Delta$ holds in general. In fact, by the contrary suppose that $D\setminus (H\cup\Delta)\neq \emptyset$ and let $x\in D\setminus (H\cup \Delta).$ It is clear that $x\neq 0,-1$. Now, $$(x+1)[x^{-1}\sigma(x)-(x+1)^{-1}\sigma(x+1)]=x^{-1}\sigma(x)-1\in \Delta.$$ Since $x^{-1}\sigma(x)\neq 1$, the element $x^{-1}\sigma(x)-(x+1)^{-1}\sigma(x+1)$ is a non-zero element of $\Delta.$ Therefore, $x+1\in \Delta$ and as a result, $x\in \Delta$, which contradicts the hypothesis. So, if we consider a division ring $D$ with center $F$ and a maximal subfield $L$ of $D,$ then putting  $D=L$ and $\Delta=F$ in above argument, one has $L=F$ which implies the following generalization of $\dag.$
\begin{thm}
Let $D$ be a division ring with center $F$ and $L$ a maximal subfield of $D$. If there exists non-trivial $\sigma\in {\rm Aut}(L)$ such that $x^{-1}\sigma(x)\in F$ for any $x\in L^*$, then $D$ is commutative.
\end{thm}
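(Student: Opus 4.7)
My plan is to specialise the ``in general'' argument of the paragraph immediately preceding the statement to the extension $F\subseteq L$ in place of $\Delta\subseteq D$, and thereby force $L=F$. Once $L=F$ is established, maximality of $L$ gives $L=C_D(L)=C_D(F)=D$ (since $F$ is the center of $D$), so $D$ will be a commutative field.

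To force $L=F$, I will argue by contradiction: assume $F\subsetneq L$ and set $H=\{a\in L\mid\sigma(a)=a\}$. Since $\sigma$ is non-trivial, $H$ is also a proper subfield of $L$. The first thing to verify is that $L\setminus (H\cup F)\neq\emptyset$, i.e.\ that no field is the union of two of its proper subfields: picking $a\in L\setminus H$ and $b\in L\setminus F$, a short case analysis on whether $a\in F$ and whether $b\in H$ exhibits an element of $L\setminus(H\cup F)$ among $a$, $b$, $a+b$. I may and will take such an element $x$ with $x\neq -1$.

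The core of the proof is then the algebraic identity
\[
(x+1)\bigl[x^{-1}\sigma(x)-(x+1)^{-1}\sigma(x+1)\bigr]=x^{-1}\sigma(x)-1,
\]
which is a one-line verification using the commutativity of $L$. The hypothesis places $x^{-1}\sigma(x)$ and $(x+1)^{-1}\sigma(x+1)$ both in $F$, so the bracketed quantity lies in $F$; since $x\notin H$ we have $\sigma(x)\neq x$, equivalently $x^{-1}\sigma(x)\neq 1$, so this bracketed factor is non-zero. Dividing by it forces $x+1\in F$ and hence $x\in F$, contradicting $x\in L\setminus(H\cup F)$.

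The only non-routine ingredient is spotting the identity above; once it is written down, the rest is book-keeping. I do not anticipate a genuine obstacle, since the theorem is, as the authors' preceding discussion makes clear, a direct instance of the autocommutator argument restricted to the maximal subfield $L$, with the single mild subtlety that the ``not a union of two proper subfields'' step must be handled without invoking any cardinality hypothesis on $F$.
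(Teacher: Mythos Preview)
Your proposal is correct and follows essentially the same route as the paper: the authors explicitly state that the theorem is obtained by specialising their preceding autocommutator argument to the pair $\Delta=F\subseteq L=D$, using the same identity $(x+1)[x^{-1}\sigma(x)-(x+1)^{-1}\sigma(x+1)]=x^{-1}\sigma(x)-1$ to force $x\in F$, and then deducing $D=C_D(L)=C_D(F)=F$ from $L=F$. Your only elaboration is the explicit $a,b,a+b$ argument for $L\ne H\cup F$, where the paper simply invokes the ``vector space is not a finite union of proper subspaces'' fact; for two subfields this is of course the same argument.
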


 Now, a question is naturally proposed: Which other properties of a division ring may arise from its maximal subfields$?$ At the first step to generalize Theorem~\ref{dim}, one might be interested in thinking of following question.  Whether can one conclude the algebraicity of a division ring over its center from the algebraicity of its maximal subfields over the center$?$ The second named author and D.H. Dung in \cite{mai}, could provide a non-algebraic division ring which contains a maximal subfield algebraic over the center and answer the question in negative. In fact, using the Mal'cev-Neumann's construction of Laurent series rings, they present an example of a division ring $D$ whose all elements are Kurosh, that is for each element $x\in D$ there exists a
centrally finite division subring of $D$ containing $x$ as a non-central element, but $D$ is not algebraic.  Nevertheless,  the question still could achieve positive answer if we assume all maximal subfields are algebraic over the center. In fact,

\begin{con}
Every division ring whose all maximal subfields are algebraic over the center is algebraic.
\end{con}

Note that for a given division ring, the property of being algebraic over every maximal subfield seems rather similar to the property of being algebraic over the center. Indeed, it is not known examples of division rings of either phenomenon except for division rings which are locally PI. Thus it could not be appealing if we consider the situation under which a division ring is algebraic over every maximal subfield and ask whether division ring is algebraic over the center.

In the sequel we will study a special kind of subfields in a division ring so-called self-invariant subfields. A subfield $K$ of $D$ is called \textit{self-invariant} if $N_{D^*}(K^*)\cup\{0\}=K.$  In first instance one might be interested in learning whether such kind of structures really does exist. The purpose of the preceding argument is to somehow prove the existence of such structures in division rings. Indeed, let $D$ be a centrally finite division ring over its center $F$ and ${\rm Char}(F)=p>0.$ Assume that $K$ is a purely inseparable maximal subfield in $D.$ We know that the minimal polynomial of every element $a\in K$ is of the form $f(x)=(x-a)^{p^n},$ for some integer $n.$ Now, if $\sigma\in {\rm Aut}(K),$ then $\sigma(a)$ is also a root of $f(x).$ Hence, for every $a\in K,$ we have that $\sigma(a)=a$ and consequently ${\rm Gal}(K/F)=\{id\}.$ We claim that $K$ is a self-invariant subfield in $D,$ in fact otherwise, applying the Skolem-Noether Thorem one can establish a non-trivial automorphism to $K$ under which the central elements are invariant.   J.M. Bois and G. Vernik in \cite[Theorem 2.1.5]{bois} proved that if $F$ is a field with ${\rm Char}(F)=p>2$ and $L$ is a finite dimensional non-abelian solvable Lie algebra over $F,$ then $K(L),$ the division ring of fractions of the enveloping algebra $U(L),$ contains some purely inseparable maximal subfield. There are more results regarding self-invariant subfields of division rings in \cite{aa}.

 The main goal is here to provide answer to following question:
\begin{que}\label{que}
Let $D$ be a division ring with center $F$ whose all maximal subfields are self-invariant. Can one conclude that $D$ is commutative$?$
\end{que}

In this note we show that all maximal subfields of the Mal'cev-Neumann division ring of a non-cyclic free group over a field with respect to a group morphism, which is an infinite dimensional division ring, are self-invariant, and this denies the above question in the case of infinite dimensional division rings.
\bigskip

\section{Self-Invariant Subfields}

The Skolem-Noether Theorem says that in the finite dimensional division rings, a subfield is not self-invariant if and only if contains some non-trivial automorphisms. Therefore, in general if $D$ is a finite dimensional division ring with center $F$ and $K$ is a maximal subfield of $D$ with ${\rm Gal}(K/F)\neq \{id\},$ then $K$ is not self-invariant. In other words one can conclude that if $D$ is a division ring with center $F$ which contains some self-invariant subfields $K$ such that ${\rm Gal}(K/F)\neq \{id\},$ then ${\rm dim}_F(D)=\infty.$ In other extreme, when $D$ is finite dimensional over its center looking at natural surjective homomorphism of groups, $\phi:N_{D^*}(K^*)\longrightarrow {\rm Gal}(K/F),$ one can find that ${\rm ker}(\phi)=K^*$ and consequently obtain that $N_{D^*}(K^*)/K^*\simeq {\rm Gal}(K/F).$ Hence, if the Question~\ref{que} achieves answer in affirmative, then every non-commutative division ring finite dimensional over its center admits a maximal subfield, say $K,$ with non-trivial ${\rm  Gal}(K/F).$ This gives also a positive answer to the certain case of a problem due to Mahdavi that asserts whether every division ring algebraic over its center $F$ contains a subfield $K$ with non-trivial ${\rm Gal}(K/F)$~\cite[Problem 10, P. 82]{Pa_Ma_00}. In the special case when the division ring is of prime index, one can easily see that division ring contains a maximal subfield with non-trivial ${\rm Gal}(K/F)$ if and only if division ring is cyclic, that is, division ring contains a maximal subfield which is cyclic Galois over the center $F.$ Therefore, if the Question~\ref{que} finds positive answer one can show that every division ring of prime index is cyclic. This is Albert's well-known conjecture that is still unsolved!

By the celebrated Brauer-Cartan-Hua Theorem, one can easily verify that if $K$ is a maximal subfield of a non-commutative division ring $D,$ then $N_{D^*}(K^*)\subsetneq D.$ It is known that every maximal subfield of a division ring is also maximal with respect to inclusion. Hence, if $K$ is a self-invariant subfield of $D,$ then $K$ is a maximal subfield of $D.$ Since otherwise we can find a subfield $T$ such that $K\subsetneq T.$ Therefore, $T\subseteq N_{D^*}(K^*)=K^*$ is a contradiction. Note that the converse is not true. Namely, let $D$ be the cyclic algebra $(\mathbb{Q}(v)/\mathbb{Q},\sigma,2)$ with $v^3-3v+1=0$ and $\sigma(v)=v^2-1.$ By \cite[P. 238, Exercise 14.16]{lam}, $\mathbb{Q}(v)$ is a cyclic cubic field extension over $\mathbb{Q},$ and $D$ is a $9$-dimensional division algebra over its center $\mathbb{Q}.$ Then $D$ contains a subfield $K$ isomorphic to $\mathbb{Q}(a)$ with $a^3 = 2.$ The field $K$ has ${\rm Gal}(K/\mathbb{Q})=\{id\},$ so it is a self-invariant subfield in $D.$ Note that $K$ is a self-invariant subfield in $D$ with no non-central proper subfield. On the other hand, since $D$ is cyclic, it admits a maximal subfield $K$ with ${\rm Gal}(K/\mathbb{Q})\neq\{id\}.$ Thus, $D$ also contains infinitely many subfields which are not self-invariant.

If $K$ assumed to be  a self-invariant maximal subfield in $D,$ then there exists an element $x\in D\setminus K.$ Hence, $xKx^{-1}$ is another self-invariant maximal subfield in $D$ different from $K.$ Now, consider $t\in K\setminus N_{D^*}(xKx^{-1}).$ One can observe that $txKx^{-1}t^{-1}$ is a self-invariant maximal subfield different from $xKx^{-1}.$ Also, $txKx^{-1}t^{-1}$ is different from $K,$ because otherwise $tx\in K$ and $x\in K,$  contradiction! Note that $K\cup xKx^{-1}\nsubseteq txKx^{-1}t^{-1},$ since otherwise $xKx^{-1}\subseteq C_D(K)=K$ yields a contradiction. Thus one can consider $t'\in K\cup xKx^{-1}\setminus txKx^{-1}t^{-1},$ and applying aforementioned argument find a self-invariant maximal subfield $t'txKx^{-1}t^{-1}t'^{-1}$ distinct from $K,xKx^{-1},$ and $txKx^{-1}t^{-1}.$ In shorts, if a division ring contains a self-invariant subfield, then it admits infinitely many self-invariant maximal subfields. Hence, we can state following result.

\begin{lem}\label{mehdi} Let $D$ be a division ring and $K$ a maximal subfield of $D$. Then, $K$ is self-invariant if and only if so is $aKa^{-1}$ for every $a\in D^*$.
\end{lem}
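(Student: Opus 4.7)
The plan is to reduce everything to the transformation rule $N_{D^{*}}(aKa^{-1})=aN_{D^{*}}(K)a^{-1}$, from which the biconditional becomes purely formal. The converse direction is actually immediate because taking $a=1$ (or more generally observing that $K=a^{-1}(aKa^{-1})a$) lets us apply the forward direction to $a^{-1}$; so the real content is the forward implication.

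First I would record the elementary fact that conjugation sends maximal subfields to maximal subfields: if $C_{D}(K)=K$, then $C_{D}(aKa^{-1})=aC_{D}(K)a^{-1}=aKa^{-1}$, so $aKa^{-1}$ is again a subfield of $D$ equal to its own centralizer. This step is routine and uses only that conjugation is a ring automorphism of $D$ fixing the center $F$.

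Next I would verify the key identity
\[
N_{D^{*}}\!\bigl(aKa^{-1}\bigr)\;=\;a\,N_{D^{*}}(K)\,a^{-1}.
\]
The computation is direct: $b\in N_{D^{*}}(aKa^{-1})$ iff $b(aKa^{-1})b^{-1}\subseteq aKa^{-1}$ iff $(a^{-1}ba)K(a^{-1}ba)^{-1}\subseteq K$ iff $a^{-1}ba\in N_{D^{*}}(K)$ iff $b\in aN_{D^{*}}(K)a^{-1}$. Since maximal subfields are self-centralizing, both sides automatically contain $K^{*}$ (resp.\ $(aKa^{-1})^{*}$).

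Finally I would combine the two steps. If $K$ is self-invariant, then $N_{D^{*}}(K)=K^{*}$, so
\[
N_{D^{*}}\!\bigl(aKa^{-1}\bigr)=aK^{*}a^{-1}=\bigl(aKa^{-1}\bigr)^{*},
\]
showing that $aKa^{-1}$ is self-invariant for every $a\in D^{*}$. Conversely, if $aKa^{-1}$ is self-invariant for every $a\in D^{*}$, specialising to $a=1$ gives $K$ self-invariant (alternatively one applies the forward direction to $aKa^{-1}$ with conjugating element $a^{-1}$). There is no real obstacle: the only thing to be careful about is that the normalizer identity is stated at the level of multiplicative groups, which is why we write it with $K^{*}$ rather than $K$, and the "$\cup\{0\}$" in the definition of self-invariance is then restored at the end.
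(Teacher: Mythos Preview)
Your argument is correct and is exactly the natural one: the identity $N_{D^{*}}(aKa^{-1})=aN_{D^{*}}(K)a^{-1}$ together with the observation that conjugation preserves the maximal-subfield property immediately gives both directions. The paper in fact does not supply a separate proof of this lemma; it is stated as a formal summary of the preceding paragraph, where the forward implication (conjugates of a self-invariant maximal subfield are again self-invariant) is used without justification to produce infinitely many such subfields, and the converse is trivial upon taking $a=1$. So your write-up simply makes explicit what the paper leaves implicit.
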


In this section we provide a division ring infinite dimensional over its center that demonstrates negative answer to Question~\ref{que}. More precisely, we investigate maximal subfields of Mal'cev-Neumann division ring and show all of them are self-invariant, however, the division ring is not commutative. Let start with some preliminary facts in order to introduce the structure of Mal'cev-Neumann division ring.

For a subset $S$  of a group $G$, we define a sequence of subgroups $\langle S\rangle_n,n\in {\mathbb N},$ of $G$ containing $S$ as follows: put $\langle S\rangle_0=G$. For $n>0$, the subgroup $\langle S\rangle_n$ is the normal closure of $S$ in $\langle S\rangle_{n-1}$. Then, we have a sequence of normal subgroups $$G=\langle S\rangle_0\trianglerighteq \langle S\rangle_1\trianglerighteq \langle S\rangle_2\trianglerighteq \cdots.$$ In general, for every natural number $n$, the subgroup $\langle S\rangle_n$ is subnormal in $G$ and unnecessary $n$-subnormal in $G$.
A recent result, in 2017, showed that if $G$ is a non-cyclic free group and $g\in G\backslash\{1\}$, then $\langle g\rangle_n$ is an $n$-subnormal subgroup of $G$ \cite[Corollary 1.3]{Pa_Ol_17}. For convenience, we will use the following form of \cite[Corollary 1.3]{Pa_Ol_17}.

\begin{lem}\label{l1} {\rm\cite[Corollary 1.3]{Pa_Ol_17}} Let $G$ be a non-cyclic free group and $g\in G\backslash \{1\}$. Assume $N$ is a non-trivial subgroup of $\langle g\rangle_{n}$. If $N$ is $\ell$-subnormal in $G$, then $\ell\ge n$.
\end{lem}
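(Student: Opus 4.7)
The plan is to recognise Lemma~\ref{l1} as essentially a transcription of Olshanskii's \cite[Corollary 1.3]{Pa_Ol_17} into the notation of this paper, so that the proof reduces to a matching exercise rather than a fresh argument. My first step would be to reproduce the precise statement from \cite{Pa_Ol_17}, which asserts that in a non-cyclic free group the iterated normal closure $\langle g\rangle_n$ of a non-trivial element $g$ has subnormal defect exactly $n$ in $G$, and, crucially, that no non-trivial subgroup of $\langle g\rangle_n$ can be reached from $G$ by a shorter subnormal chain. Once this is laid out, verifying that Olshanskii's conventions on normal closure in successive subgroups and on subnormal depth coincide with ours (namely $\langle S\rangle_0 = G$, $\langle S\rangle_n$ the normal closure of $S$ in $\langle S\rangle_{n-1}$, and $\ell$-subnormality meaning the minimum length of a subnormal chain) is routine.

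If one were forced to derive the stated form from the apparently weaker claim that $\langle g\rangle_n$ itself is $n$-subnormal (and not lower), I would argue by induction on $\ell$. The base $\ell = 0$ would demand $N = G \subseteq \langle g\rangle_n$, contradicting the strict descent $G = \langle g\rangle_0 \supsetneq \langle g\rangle_1 \supsetneq \cdots$ guaranteed in a non-cyclic free group when $g \ne 1$. For the inductive step, starting from an $\ell$-chain $G = K_0 \trianglerighteq K_1 \trianglerighteq \cdots \trianglerighteq K_\ell = N$, the natural manoeuvre is to intersect each $K_i$ with the appropriate $\langle g\rangle_j$, or to pass to normal closures of $N$ inside the successive $K_i$, in order to manufacture from the $\ell$-chain a witness that some $\langle g\rangle_m$ admits a subnormal chain of length strictly less than $m$, thereby contradicting Olshanskii's depth bound for that $m$.

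The principal obstacle does not lie in the reduction sketched above but inside Olshanskii's paper itself: bounding the subnormal depth from below in a free group requires combinatorial and geometric machinery (small cancellation theory, van Kampen diagrams, and fine analysis of the normal-closure filtration) that is entirely external to the present note. Consequently my final plan is simply to invoke \cite[Corollary 1.3]{Pa_Ol_17} in the form stated and move on, treating the deep input as a black box.
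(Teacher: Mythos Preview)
Your proposal is correct and matches the paper's treatment exactly: Lemma~\ref{l1} is stated as a direct citation of \cite[Corollary 1.3]{Pa_Ol_17} with no proof supplied, so invoking Olshanskii's result as a black box is precisely what the authors do. Your additional inductive sketch is extra content not present in the paper, but it does no harm.
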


We also borrow the following result.
\begin{lem}\label{normalizer}{\rm \cite[Exersice 7, P. 42]{Bo_MaKaSo_76}} Let $G$ be a free group and $h\in G\backslash \{1\}$. The normalizer $N_G(\langle h\rangle)$ of the subgroup $\langle h\rangle$ in $G$ is abelian.
\end{lem}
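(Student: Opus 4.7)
The plan is to reduce the problem to showing that $N_G(\langle h\rangle)=C_G(h)$, and then invoke the classical fact that centralizers of non-identity elements in free groups are infinite cyclic (Magnus-Karrass-Solitar). Granted these two ingredients, the conclusion is immediate, since any cyclic group is abelian.

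For the reduction, I would take $g\in N_G(\langle h\rangle)$ and write $ghg^{-1}=h^k$ for some $k\in\mathbb{Z}$. The goal is to force $k=1$. Rearranging gives the identity $ghg^{-1}h^{-k}=1$, which, regarded as a word in the two letters $g$ and $h$, is nontrivially freely reduced whenever $k\ne 1$. So if $k\ne 1$, then $g$ and $h$ satisfy a nontrivial relation inside $G$. By Nielsen-Schreier, the subgroup $\langle g,h\rangle$ is itself free, of rank at most two. If its rank were two, the Hopfian property of finitely generated free groups (every surjective endomorphism $F_2\to F_2$ is an isomorphism) would force $\{g,h\}$ to be a free basis, contradicting the existence of a nontrivial relation. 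Hence $\langle g,h\rangle$ is cyclic, so $g$ and $h$ commute; then $h^k=ghg^{-1}=h$, i.e.\ $h^{k-1}=1$, and since $G$ is torsion-free and $h\ne 1$, we conclude $k=1$, i.e.\ $g\in C_G(h)$.

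The reverse inclusion $C_G(h)\subseteq N_G(\langle h\rangle)$ is tautological, so $N_G(\langle h\rangle)=C_G(h)$, and cyclicity of $C_G(h)$ closes the argument. The only non-elementary ingredient is the cyclicity of centralizers of non-identity elements in a free group; I would simply cite this classical result rather than re-prove it. Everything else is a mild bookkeeping observation combined with Nielsen-Schreier, the Hopf property, and torsion-freeness of $G$.
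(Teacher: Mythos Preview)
The paper does not supply a proof of this lemma; it simply cites it as \cite[Exercise 7, p.~42]{Bo_MaKaSo_76}. So there is no in-paper argument to compare against, and the question is only whether your justification stands on its own.

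It does. The reduction $N_G(\langle h\rangle)=C_G(h)$ together with the classical fact that centralizers of nontrivial elements in a free group are infinite cyclic is exactly the intended route, and your Nielsen--Schreier/Hopfian argument that $\langle g,h\rangle$ must be cyclic is correct. One small inaccuracy of phrasing: the word $ghg^{-1}h^{-k}$ is a nontrivial freely reduced word in the abstract letters $g,h$ for \emph{every} integer $k$, including $k=1$ (the commutator $ghg^{-1}h^{-1}$ is certainly nontrivial in the free group on two generators). So the clause ``whenever $k\ne 1$'' is not quite right. This does no damage to the logic---once you know $\langle g,h\rangle$ satisfies a nontrivial relation it must have rank at most one, hence $g$ and $h$ commute, and then torsion-freeness of $G$ forces $k=1$---but it would be cleaner to drop the case assumption entirely: the relation is present for all $k$, so $\langle g,h\rangle$ is cyclic for every $g\in N_G(\langle h\rangle)$, and $k=1$ drops out at the end.

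If you wish to shorten the argument, a quicker way to pin down $k$ is to observe that $g^{-1}$ also normalizes $\langle h\rangle$, say $g^{-1}hg=h^{m}$; then $h=g(g^{-1}hg)g^{-1}=h^{km}$, and since $h$ has infinite order, $km=1$, so $k=\pm 1$. The case $k=-1$ still needs the cyclicity argument (or an appeal to the fact that free groups contain no infinite dihedral subgroup), so your approach via Hopficity is not wasted; it is just slightly heavier machinery than strictly necessary.
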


Recall that a \textit{total ordered group} $G$ is a (non-abelian or abelian) group with a total order $\preceq$ such that for every $a,b,c\in G$, if $a\preceq b$, then $ac\preceq bc$ and $ca\preceq cb$. It is well-known that the class of total ordered groups includes free groups with dictionary order. For a total ordered group $G$, a subset $S$ of $G$ is called \textit{well-ordered} (briefly, \textit{WO}) if every non-empty subset of $S$ has a least element. For a  WO non-empty subset $S$  of $G$, we denoted by $\min(S)$ the least element in $S$.

Now, let $\Delta$ be a division ring, let $G$ be a total ordered group and let $\sigma : G\to {\rm Aut}(\Delta), g\mapsto \sigma_g,$ be a group morphism. We consider formal sums of the form $\alpha=\sum\limits_{g\in G}a_gg,$ where $a_g\in \Delta$. For such $\alpha$, we define ${\rm supp}(\alpha)=\{g\in G\mid a_g\ne 0\}$ and call it the {\it support} of $\alpha$. Put $$\Delta((G,\sigma))=\left\{\alpha=\sum\limits_{g\in G}a_gg\mid {\rm supp}(\alpha) \text{ is WO } \right\}.$$ For every $\alpha=\sum\limits_{g\in G}a_gg, \beta=\sum\limits_{g\in G}b_gg\in \Delta((G,\sigma))$, we define $$\alpha+\beta=\sum\limits_{g\in G}(a_g+b_g)g$$ and $$\alpha\beta=\sum\limits_{t\in G}\left( \sum\limits_{gh=t} a_g\sigma_g(b_h)\right)t.$$ The above operators are well-defined \cite{lam} and moreover,

\begin{lem}{\rm \cite[Theorem  14.21]{lam}}\label{l2}
	$\Delta((G,\sigma))$ is a division ring.
\end{lem}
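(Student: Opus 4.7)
The plan is to verify the ring axioms and the existence of multiplicative inverses for every nonzero element of $\Delta((G,\sigma))$. Everything rests on a single combinatorial fact about well-ordered subsets of a totally ordered group, which I would establish first: if $S_1,S_2\subseteq G$ are WO, then the product $S_1 S_2=\{g_1 g_2\mid g_i\in S_i\}$ is WO and every $t\in G$ has only finitely many factorisations $t=g_1 g_2$ with $g_i\in S_i$; and, strengthening this, if $S\subseteq G$ is WO with every element strictly greater than the identity $e$ of $G$, then $\bigcup_{n\ge 0} S^n$ is WO and every $t$ in this union admits only finitely many factorisations $t=s_1\cdots s_n$ as $n$ varies. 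Both are proved by extraction: if either property fails, a Ramsey-style selection of monotone subsequences produces an infinite strictly descending chain in $S$ or in some $S_i$, contradicting the WO hypothesis. This combinatorial step is the main obstacle; everything else is bookkeeping.

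Once the combinatorial key is in hand, the ring axioms fall out. Applied to $S_1=\text{supp}(\alpha)$ and $S_2=\text{supp}(\beta)$, it shows that the inner sum defining $\alpha\beta$ is finite and that $\text{supp}(\alpha\beta)\subseteq S_1 S_2$ is WO; the sum is even easier, since $\text{supp}(\alpha+\beta)\subseteq \text{supp}(\alpha)\cup \text{supp}(\beta)$ and the union of two WO sets in a totally ordered set is WO. Associativity of multiplication follows formally, using $\sigma_g\circ\sigma_h=\sigma_{gh}$ (which holds because $\sigma$ is a group morphism) to move twists past one another; distributivity, the two-sided unit $1_\Delta\cdot e$, and the remaining axioms are routine.

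For inverses, take $0\neq \alpha\in \Delta((G,\sigma))$ and let $g_0=\min(\text{supp}(\alpha))$, which exists because the support is WO. Factor $\alpha=a_{g_0} g_0(1-\gamma)$; a direct computation with the multiplication rule shows that every element of $\text{supp}(\gamma)$ is strictly greater than $e$ in the order on $G$. By the strengthened combinatorial key, the formal series $\sum_{n\ge 0}\gamma^n$ has support inside the WO set $\bigcup_{n\ge 0}\text{supp}(\gamma)^n$, and each group element is hit by only finitely many terms, so the series defines a genuine element $\beta\in\Delta((G,\sigma))$ which telescopes to a two-sided inverse of $1-\gamma$. Multiplying on the right by $(a_{g_0}g_0)^{-1}=\sigma_{g_0^{-1}}(a_{g_0}^{-1})\,g_0^{-1}$ then gives $\alpha^{-1}$, completing the proof.
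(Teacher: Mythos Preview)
Your outline is correct and is precisely the classical Mal'cev--Neumann argument: Neumann's combinatorial lemma on well-ordered subsets of an ordered group makes the operations well-defined, and the geometric-series trick $\alpha=a_{g_0}g_0(1-\gamma)$ with $\mathrm{supp}(\gamma)\succ e$ supplies inverses. Note, however, that the paper does not give its own proof of this lemma at all; it simply quotes the result from \cite[Theorem~14.21]{lam}, whose proof is exactly the one you have sketched. So there is nothing to compare: your proposal reproduces the cited reference, which is all the paper relies on.
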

The division ring $\Delta((G,\sigma))$ is called the \textit{Mal'cev-Neumann division ring} of $G$ over $\Delta$ with respect to $\sigma$.
The following lemma is useful for not only this section but also next one.

\begin{lem}\label{l3} Let $D=\Delta((G,\sigma))$ be the Mal'cev-Neumann division ring of a total order group $G$ over a division ring $\Delta$ with respect to a group morphism $\sigma : G\to {\rm Aut}(\Delta)$. Put $v : D^*\to G, \alpha\mapsto \min({\rm supp}(\alpha)).$ Then,
	\begin{enumerate}
		\item The function $v$ is a surjective group morphism.
		\item For every $\alpha, \beta\in D$, if $v(\alpha)\ne v(\beta)$, then $v(\alpha+\beta)=\min\{v(\alpha), v(\beta)\}$.
	\end{enumerate}
	
\end{lem}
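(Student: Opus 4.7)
The plan is to work directly from the definitions of the multiplication and addition in $\Delta((G,\sigma))$, using two properties of the total order on $G$: the order is compatible with multiplication (on both sides), and any nonempty subset of a support is well-ordered, so a minimum exists. Part (2) is essentially a bookkeeping observation, while part (1) requires checking that when we multiply two series, the minimum of the support of the product is exactly the product of the minima, with a nonzero coefficient there.

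For part (2), I would take $\alpha,\beta\in D^*$ with $v(\alpha)\ne v(\beta)$ and assume without loss of generality that $v(\alpha)\prec v(\beta)$. Writing $\alpha=\sum a_g g$ and $\beta=\sum b_g g$, I note that $\mathrm{supp}(\alpha+\beta)\subseteq\mathrm{supp}(\alpha)\cup\mathrm{supp}(\beta)$. Every element of $\mathrm{supp}(\beta)$ is $\succeq v(\beta)\succ v(\alpha)$, so the coefficient of $v(\alpha)$ in $\alpha+\beta$ is $a_{v(\alpha)}+0=a_{v(\alpha)}\ne 0$, and no element of either support is $\prec v(\alpha)$. Hence $v(\alpha+\beta)=v(\alpha)=\min\{v(\alpha),v(\beta)\}$.

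For part (1), surjectivity is immediate: for any $g\in G$ the element $1\cdot g\in D^*$ satisfies $v(1\cdot g)=g$. The main step is to show $v(\alpha\beta)=v(\alpha)v(\beta)$ for $\alpha,\beta\in D^*$. Set $g_0=v(\alpha)$, $h_0=v(\beta)$. From the definition,
\[
\alpha\beta=\sum_{t\in G}\Bigl(\sum_{gh=t}a_g\sigma_g(b_h)\Bigr)t.
\]
The order-compatibility gives, for any $g\in\mathrm{supp}(\alpha)$, $h\in\mathrm{supp}(\beta)$, the chain $gh\succeq g_0 h\succeq g_0 h_0$, with equality throughout if and only if $g=g_0$ and $h=h_0$. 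Therefore every $t\in\mathrm{supp}(\alpha\beta)$ satisfies $t\succeq g_0h_0$, and the coefficient at $t=g_0h_0$ is the single term $a_{g_0}\sigma_{g_0}(b_{h_0})$. Since $a_{g_0}\ne 0$, $b_{h_0}\ne 0$, and $\sigma_{g_0}$ is an automorphism of $\Delta$, this product is nonzero, so $v(\alpha\beta)=g_0h_0=v(\alpha)v(\beta)$ as required.

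The only real obstacle is making sure the inequality $gh\succeq g_0h_0$ with the stated equality case is justified carefully, because $G$ need not be abelian; this is precisely where two-sided compatibility of $\preceq$ with multiplication is used (first multiply $g_0\preceq g$ on the right by $h$, then multiply $h_0\preceq h$ on the left by $g_0$, and transitivity closes the argument). Everything else is a direct unpacking of the definitions of $v$, support, and the product in $\Delta((G,\sigma))$.
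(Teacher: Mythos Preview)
Your proof is correct and follows essentially the same approach as the paper's: for part~(1) you identify the minimum of $\mathrm{supp}(\alpha\beta)$ as $v(\alpha)v(\beta)$ via the order-compatibility inequality and check the leading coefficient is nonzero, and your treatment of part~(2) simply fills in what the paper leaves to the reader. If anything, you are more careful than the paper in spelling out the two-sided compatibility step $gh\succeq g_0h\succeq g_0h_0$ and the equality case needed for uniqueness of the factorization at the minimum.
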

\begin{proof} {1. If $\alpha=\sum\limits_{g\in G}a_gg \in D^*$, then ${\rm supp}(\alpha)$ is a non-empty subset of $G$, which implies that $v$ is well-defined.
		Now, let $\alpha=\sum\limits_{g\in G}a_gg, \beta=\sum\limits_{g\in G}b_gg\in D^*$ with $g_{\alpha}=\min({\rm supp}(\alpha))$ and  $g_{\beta}=\min({\rm supp}(\beta))$. Since $$\alpha\beta=\sum\limits_{t\in G}\left( \sum\limits_{gh=t} a_g\sigma_g(b_h)\right)t,$$ the support of $\alpha\beta$ is $${\rm supp}(\alpha\beta)=\{t\in G\mid \sum\limits_{gh=t} a_g\sigma_g(a_h)\ne 0\}.$$ Observe that for every $g,h\in G$ with $a_g\ne 0$ and $b_h\ne 0$, one has $g_\alpha\le g$ and $g_\beta\le h$, so $g_\alpha g_\beta\le gh$. Moreover, $a_{g_\alpha}\sigma_g(b_{g_\beta})\ne 0$ which implies that $\min({\rm supp}(\alpha\beta))=g_\alpha g_\beta$. Hence, $v(\alpha\beta)=v(\alpha)v(\beta)$. The group morphism $v$ is surjective trivially since for every $g\in G$, one has $v(g)=g$. Thus, the proof of (1) is complete.
		
		2. The proof of this assertion is easy and we left it to the reader. }	
\end{proof}

Here, we focus on a special case where $\Delta$ is a field and $G$ is a free group with Magnus order (as in the proof of \cite[Theorem 6.31]{lam}). First, we need to know the center of $\Delta((G))=\Delta(G,Id)$.
\begin{lem}{\rm \cite[Corollary 14.26]{lam}}\label{center} If $G$ is a free group of rank $\ge 2$ with dictionary order  and $\Delta$ is a field, then $Z(\Delta((G)))=\Delta$.
\end{lem}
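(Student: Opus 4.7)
The plan is to prove the two inclusions of $Z(\Delta((G)))=\Delta$ separately, with essentially all the content in the hard direction $Z(\Delta((G)))\subseteq \Delta$. The opposite inclusion is immediate: since the action $\sigma$ on $\Delta((G))=\Delta((G,{\rm Id}))$ is trivial and $\Delta$ is a field, every scalar $c\in \Delta$ commutes with each basis monomial $g\in G\subseteq \Delta((G))$, and hence with every formal series $\sum a_g g$ by $\Delta$-bilinearity of the multiplication.

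For the reverse inclusion, let $\alpha=\sum_{g\in G}a_g g\in Z(\Delta((G)))$. The first step is to test centrality against each $h\in G\subseteq \Delta((G))$. Since $\sigma={\rm Id}$ one has $h\alpha=\sum_g a_g(hg)$ and $\alpha h=\sum_g a_g(gh)$; comparing the coefficients of an arbitrary $t\in G$ on both sides, and reindexing via $g\mapsto h^{-1}gh$, yields $a_g=a_{hgh^{-1}}$ for all $g,h\in G$. Consequently ${\rm supp}(\alpha)$ is a union of $G$-conjugacy classes and the coefficient function $g\mapsto a_g$ is constant on each such class. It is therefore enough to show ${\rm supp}(\alpha)\subseteq \{1\}$, since then $\alpha=a_1\cdot 1\in \Delta$.

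To establish this reduction I would argue by contradiction. Suppose some $g_0\ne 1$ lies in ${\rm supp}(\alpha)$; then the entire conjugacy class $C(g_0)=\{hg_0h^{-1}\mid h\in G\}$ is contained in ${\rm supp}(\alpha)$. Since $G$ is a free group of rank at least two, its center is trivial, and hence $C(g_0)$ is infinite. The main obstacle, and the only place where the specific dictionary (Magnus) bi-invariant order on $G$ is really used, is to convert this infiniteness into a contradiction with the well-orderedness of ${\rm supp}(\alpha)$. My plan is to exhibit an infinite strictly descending sequence of elements of $C(g_0)$ in the order, which is impossible in a well-ordered set. Concretely, I would pick a free generator $y$ of $G$ whose letter does not appear in the cyclically reduced form of $g_0$ (such a $y$ exists because ${\rm rank}(G)\ge 2$ and $g_0$ involves only finitely many generators), examine the iterated conjugates $\{y^{-n}g_0 y^n\}_{n\ge 1}$, and verify from the explicit definition of the dictionary order on reduced words that an appropriate subsequence of these conjugates strictly decreases in the order. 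Producing any such descending chain contradicts the well-orderedness of ${\rm supp}(\alpha)$ and forces ${\rm supp}(\alpha)\subseteq \{1\}$, thereby completing the proof.
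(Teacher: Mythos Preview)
The paper does not prove this lemma at all; it simply quotes it from Lam \cite[Corollary~14.26]{lam}. So there is no in-paper argument to compare against, and the relevant question is whether your proof stands on its own.

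Your reduction is exactly the standard one: testing a central $\alpha$ against group elements forces $a_g=a_{hgh^{-1}}$, so ${\rm supp}(\alpha)$ is a union of conjugacy classes, and it remains to show that no conjugacy class of a nontrivial element of $G$ can be well-ordered. This is correct and is precisely how the result is proved in the reference.

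Two comments on your final step. First, a small gap: the free generator $y$ ``not appearing in the cyclically reduced form of $g_0$'' need not exist when ${\rm rank}(G)=2$ (take $g_0=xy$). What you actually need is only some $y\in G$ with $yg_0\ne g_0y$, which exists because $Z(G)=\{1\}$; then the conjugates $c_n=y^{-n}g_0y^{n}$ are pairwise distinct. Second, you do not need any explicit feature of the Magnus order to produce a descending chain. Since conjugation by $y$ is an order-preserving bijection of $(G,\preceq)$, the relation between $c_0$ and $c_1$ propagates: if $c_1\succ c_0$ then $(c_{-n})_{n\ge 0}$ is strictly decreasing, and if $c_1\prec c_0$ then $(c_n)_{n\ge 0}$ is. Either way you contradict well-orderedness. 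Even cleaner is the textbook trick: if $C(g_0)$ were WO with least element $m$, then for every $h$ one has $hmh^{-1}\succeq m$ and $h^{-1}mh\succeq m$, hence $hmh^{-1}=m$ and $m\in Z(G)=\{1\}$, impossible. This last argument uses nothing about the particular bi-order and is the one you will find behind the cited corollary.
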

The following result is very important for this note.
\begin{lem} \label{cohn} Let $G$ be a free group with dictionary order and $\Delta$ be a field. For every $\alpha\in \Delta((G))$ with $h=v(\alpha)>1$, there exists non-zero element $\beta\in \Delta((G))$ such that  $$\beta\alpha\beta^{-1}=a_mh^m+a_{m+1}h^{m+1}+\cdots,$$ where $m\in \mathbb{Z}$, $a_i\in \Delta$ for every $i\ge m$.
	\end{lem}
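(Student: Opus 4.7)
The plan is a transfinite iterative elimination argument that kills, one at a time, the smallest group element in $\mathrm{supp}(\alpha)\setminus\langle h\rangle$ via an elementary conjugation. First, I would normalize: write $\alpha=ch+(\text{terms of valuation}>h)$ with $c\in\Delta^{\ast}$, and observe that since $v$ is a group morphism (Lemma~\ref{l3}), any target $\beta\alpha\beta^{-1}\in\Delta((\langle h\rangle))$ must satisfy $v(\beta\alpha\beta^{-1})=v(\beta)\,h\,v(\beta)^{-1}\in\langle h\rangle$. This forces $v(\beta)\in N_G(\langle h\rangle)$; by Lemma~\ref{normalizer} this normalizer is abelian, and in a non-cyclic free group it equals $C_G(h)$, so $v(\beta\alpha\beta^{-1})=h$ and in effect $m=1$ (the statement permits $m\in\mathbb{Z}$ for formal generality).

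For the iteration, set $\alpha_0:=\alpha$. Given $\alpha_n$ with $v(\alpha_n)=h$ and leading coefficient $c$, let $g_n:=\min(\mathrm{supp}(\alpha_n)\setminus\langle h\rangle)$ (stop if this set is empty), with coefficient $d_n\in\Delta^{\ast}$. Take $x_n:=h^{-1}g_n>1$ and $\beta_n:=1+b_n x_n$ with $b_n\in\Delta$. Expanding,
$$\beta_n\alpha_n\beta_n^{-1}\;=\;\alpha_n \;+\; b_n(x_n\alpha_n-\alpha_n x_n) \;+\; (\text{higher-order terms in }b_n x_n),$$
the leading piece $-b_n\alpha_n x_n$ contributes $-b_n c$ at the group element $g_n$, so tuning $b_n$ (essentially $d_n/c$, modulo an adjustment for a possible $x_n\alpha_n$ contribution at $g_n$) cancels the $g_n$-coefficient of $\alpha_n$. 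A check using the Magnus order should confirm that the remaining support introduced by the correction lies strictly above $g_n$, so $\alpha_{n+1}:=\beta_n\alpha_n\beta_n^{-1}$ satisfies $\min(\mathrm{supp}(\alpha_{n+1})\setminus\langle h\rangle)>g_n$, producing a strictly increasing chain $g_0<g_1<\cdots$ of thresholds.

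Finally, I define $\beta:=\cdots\beta_2\beta_1\beta_0$ as a (possibly transfinite) product. Convergence in $\Delta((G))$ is made possible by the Mal'cev--Neumann framework: the supports of the corrections $\beta_n-1$ concentrate at group elements $x_n=h^{-1}g_n$ that march upward in $G$, so each individual element is touched by only finitely many partial products, and the accumulated support remains well-ordered. In the limit, $\beta\alpha\beta^{-1}$ has support inside $\langle h\rangle$, yielding the stated Laurent-series form. I expect the principal obstacle to be two-fold: (i) verifying rigorously that the stage-$n$ correction introduces no new support strictly below $g_n$, since the conjugate $x_n h=h^{-1}g_n h$ could a priori fall below $g_n$ in the Magnus order and would force either a refined choice of $x_n$ or an additional higher-order correction term; and (ii) organising the iteration transfinitely, indexed by the well-ordered set $\mathrm{supp}(\alpha)\setminus\langle h\rangle$, and confirming that the resulting $\beta$ genuinely lies in $\Delta((G))$ rather than some larger completion.
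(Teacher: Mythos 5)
Your plan is essentially an attempt to reprove, from scratch, the result the paper simply cites: the paper's proof is two lines, invoking Cohn's Theorem 1.5.11 to get $\beta$ with ${\rm supp}(\beta\alpha\beta^{-1})\subseteq C_G(h)$ and then using that $C_G(h)=\langle h\rangle$ in a free group. Reproving it directly is legitimate in principle, but your sketch has a genuine gap exactly at the point you flag as obstacle (i), and it is not a technicality one can wave through. Conjugating by $\beta_n=1+b_nx_n$ with $x_n=h^{-1}g_n$, the first-order term $b_nx_n\alpha_n$ contributes at the group element $x_nh=h^{-1}g_nh$ (coefficient $b_nc$), which is again outside $\langle h\rangle$ and, because the Magnus order is only bi-invariant, satisfies $h^{-1}g_nh<g_n$ precisely when $g_nh<hg_n$ --- a case that genuinely occurs. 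In that case your ``threshold'' $\min({\rm supp}(\alpha_{n+1})\setminus\langle h\rangle)$ does not increase (it can strictly decrease), the chain $g_0<g_1<\cdots$ does not exist, and neither termination of the transfinite process nor convergence of the product $\beta$ follows. Choosing $x_n=g_nh^{-1}$ instead merely moves the parasitic term to $hg_nh^{-1}$ and fails in the opposite case, so no single elementary conjugation $1+b_nx_n$ makes the induction go.

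The missing idea is that one must eliminate the whole conjugation orbit $\{h^kg_nh^{-k}\}$ of a bad support element at once, not the single element $g_n$: to first order one solves $u\alpha_n-\alpha_nu\equiv -d_ng_n$ with $u$ supported on $\{h^kg_nh^{-k-1}\}$, and the difference equation for the coefficients has two one-sided solutions (a series over $k\ge 0$ or over $k<0$); one must pick the side on which the supports increase, according to whether $hg_nh^{-1}>g_n$ or $hg_nh^{-1}<g_n$, so that $u$ has well-ordered support and lies in $\Delta((G))$. Only after this (plus the higher-order corrections and a transfinite bookkeeping argument) does the scheme close up --- and that is essentially the content of the cited Theorem 1.5.11 of Cohn. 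As written, your proposal does not contain this mechanism, so it does not yet constitute a proof; either supply the orbit-summation argument or, as the paper does, quote Cohn and add only the observation that $C_G(h)=\langle h\rangle$ (more precisely, the cyclic group generated by the root of $h$) because $G$ is free. Your preliminary normalization via $v(\beta\alpha\beta^{-1})=v(\beta)hv(\beta)^{-1}$ is fine but not needed for the statement, which allows arbitrary $m\in\mathbb{Z}$.
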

\begin{proof}
{According to \cite[Theorem 1.5.11]{cohn}, there exists a non-zero element $\beta\in \Delta((G))$ such that ${\rm supp}(\beta\alpha\beta^{-1})\subseteq C_G(h)$. Observe that $G$ is free, so the centralizer $C_G(h)=\langle h\rangle$. Hence, since $h>1$,  $$\beta\alpha\beta^{-1}=\sum\limits_{i=m}^\infty a_ih=a_mh^m+a_{m+1}h^{m+1}+\cdots,$$ where $m\in \mathbb{Z}$, $a_i\in K$ for every $i\ge m$.}
\end{proof}

In this section, $\Delta$ is assumed to be a field, $G$ a free group of rank $\ge 2$ with dictionary order and $D=\Delta((G))$ the Mal'cev-Neumann division ring of $G$ over $\Delta$.

\begin{lem}{\rm \cite[Exercise 1.5.8]{cohn}}\label{centralizer} Let $h\in G$ be such that $h>1$ and $\alpha=\sum\limits_{i=m}^\infty a_ih^i$ be an element in $D\backslash \Delta$. Then $$C_D(\alpha)=\{\sum\limits_{i=n}^\infty b_ih^i\mid n\in \mathbb{Z}, b_i\in \Delta\text{ for every } i\ge n \}.$$
	\end{lem}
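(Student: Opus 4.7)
The plan is to establish both inclusions, with the non-trivial content lying in $C_D(\alpha)\subseteq\bigl\{\sum_{i=n}^\infty b_ih^i\bigr\}$. For the easy direction, I observe that $\alpha$ and any Laurent-style series $\sum_{i=n}^\infty b_ih^i$ are both supported on the subgroup $\langle h\rangle$, so they lie inside the Mal'cev-Neumann subring $\Delta((\langle h\rangle))\subseteq D$. Since $\langle h\rangle$ is abelian and $\sigma$ is trivial, this subring is a commutative field, so every such series commutes with $\alpha$.

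For the reverse inclusion, I would take $\beta\in C_D(\alpha)\setminus\{0\}$ and split it as $\beta=\beta_\parallel+\beta_\perp$, where $\mathrm{supp}(\beta_\parallel)\subseteq\langle h\rangle$ and $\mathrm{supp}(\beta_\perp)\cap\langle h\rangle=\emptyset$. Both pieces are well-defined elements of $D$, since any subset of a WO set is WO. The easy direction gives $\beta_\parallel\in C_D(\alpha)$, so $\beta_\perp=\beta-\beta_\parallel$ also commutes with $\alpha$, and it suffices to derive a contradiction from $\beta_\perp\ne 0$.

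The key tool is the valuation $v\colon D^*\to G$ of Lemma~\ref{l3}. To ensure that $v$ applied to $\alpha$ gives a non-trivial power of $h$, I would first replace $\alpha$ by $\alpha':=\alpha-a_0$ if $a_0\ne 0$; this is non-zero because $\alpha\notin\Delta$ forces some $a_ih^i$ with $i\ne 0$ to appear, and $\alpha'$ still centralizes $\beta_\perp$ since $a_0\in\Delta=Z(D)$ by Lemma~\ref{center}. Now $v(\alpha')=h^j$ with $j\ne 0$, and applying $v$ to $\alpha'\beta_\perp=\beta_\perp\alpha'$ yields $v(\alpha')v(\beta_\perp)=v(\beta_\perp)v(\alpha')$, i.e., $v(\beta_\perp)\in C_G(h^j)$. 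Because $G$ is free and $h^j\ne 1$, the centralizer $C_G(h^j)$ coincides with $\langle h\rangle$, using that $h$ is primitive (as inherited from the construction in Lemma~\ref{cohn}). Hence $\min\mathrm{supp}(\beta_\perp)\in\langle h\rangle$, contradicting the construction of $\beta_\perp$.

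I expect the main obstacle to be the subtlety hidden in the identification $C_G(h^j)=\langle h\rangle$: if $h$ were a proper power in $G$, its centralizer would be strictly larger than $\langle h\rangle$ and the stated equality would need rephrasing. The argument therefore leans on reading Lemma~\ref{cohn} with the implicit convention that the $h=v(\cdot)$ arising there is a primitive element of the free group $G$, so that the centralizer of any non-trivial power of $h$ equals $\langle h\rangle$.
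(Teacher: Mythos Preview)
The paper does not supply its own proof of this lemma; it merely cites Cohn's exercise. So the comparison is really an evaluation of your argument on its own merits.

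Your strategy is sound and essentially the right one: split $\beta$ along $\langle h\rangle$, reduce to showing $\beta_\perp=0$, and exploit the valuation $v$ together with the structure of centralizers in a free group. The easy inclusion is correctly justified via commutativity of $\Delta((\langle h\rangle))$, and the passage from $\alpha$ to $\alpha'=\alpha-a_0$ to force $v(\alpha')=h^j$ with $j\ne 0$ is handled properly.

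You have also correctly put your finger on the genuine issue: the step $C_G(h^j)=\langle h\rangle$ is \emph{false} unless $h$ is primitive. In a free group the centralizer of a non-trivial element is the maximal infinite cyclic subgroup containing it, namely $\langle h_0\rangle$ where $h_0$ is the primitive root of $h$. If $h=h_0^k$ with $k\ge 2$, then $h_0\in C_D(\alpha)$ (it commutes with every power of $h$) yet $h_0\notin\Delta((\langle h\rangle))$, so the lemma as literally stated is false. Your diagnosis is therefore not a minor quibble but identifies a real defect in the statement.

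What does \emph{not} work is your proposed rescue via Lemma~\ref{cohn}. That lemma takes $h=v(\alpha)$ for an arbitrary $\alpha$, and its proof makes the very same unjustified assertion ``$C_G(h)=\langle h\rangle$''. Nothing in either lemma forces $h$ to be primitive. The honest fix is to state Lemma~\ref{centralizer} with the conclusion $C_D(\alpha)=\Delta((\langle h_0\rangle))$, or equivalently to add the hypothesis that $h$ is primitive. This is harmless for the paper's application: in the proof of the main theorem one may replace $h$ by its primitive root $h_0$ throughout, since $\alpha_1$ is then a Laurent series in $h_0$ and the subsequent argument only uses that the support variable generates its own centralizer. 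With that adjustment your proof goes through cleanly.
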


\begin{thm}
	Every maximal subfield of $D$ is self-invariant.
\end{thm}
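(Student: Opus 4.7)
The plan is to first conjugate $K$ into a concrete Laurent--series form, and then analyze the normalizer term by term using the group-valued valuation $v$ from Lemma~\ref{l3}. Since $Z(D)=\Delta$ by Lemma~\ref{center} and $D$ is non-commutative, $K\supsetneq\Delta$; pick $\alpha\in K\setminus\Delta$, and after replacing it by $\alpha^{-1}$ or by $\alpha$ minus its constant term one may assume $h:=v(\alpha)>1$. Lemma~\ref{cohn} gives $\beta\in D^*$ with $\beta\alpha\beta^{-1}\in\Delta((\langle h\rangle))$. By Lemma~\ref{centralizer}, $C_D(\beta\alpha\beta^{-1})=\Delta((\langle h\rangle))$, which is a commutative field. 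The maximal subfield $\beta K\beta^{-1}$ lies inside this centralizer, and since the centralizer is commutative a double-centralizer argument using $C_D(\beta K\beta^{-1})=\beta K\beta^{-1}$ forces $\beta K\beta^{-1}=\Delta((\langle h\rangle))$. Maximality of $\beta K\beta^{-1}$ also forces $h$ to be primitive in $G$, because otherwise $\Delta((\langle h\rangle))$ would sit properly inside $\Delta((\langle h_0\rangle))$ for the primitive root $h_0$ of $h$. By Lemma~\ref{mehdi} I may from now on assume $K=\Delta((\langle h\rangle))$ with $h>1$ primitive in~$G$.

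Let $x\in N_{D^*}(K^*)$; the goal is $x\in K$. Applying the morphism $v$ to $xhx^{-1}\in K$ gives $v(x)hv(x)^{-1}\in\langle h\rangle$, so $v(x)$ normalizes $\langle h\rangle$ in $G$. Lemma~\ref{normalizer} says $N_G(\langle h\rangle)$ is abelian, so $v(x)$ commutes with $h$; primitivity of $h$ then yields $v(x)\in C_G(h)=\langle h\rangle$, and in particular $v(y)=h$ where $y:=xhx^{-1}$. Partition $\mathrm{supp}(x)$ into its intersection with $\langle h\rangle$ and its complement, and write $x=x''+x'$ accordingly. Then $x''\in K$, $x''\neq 0$ (because $v(x)\in\langle h\rangle$ means the minimum of the support lies in the $x''$-part), and if $x'\neq 0$ then $v(x')\notin\langle h\rangle$.

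Assume for contradiction $x'\neq 0$. Rewriting the identity $xh=yx$ yields
$$x''h-yx''\;=\;yx'-x'h.$$
The left-hand side lies in $K$, so if nonzero its $v$-value lies in $\langle h\rangle$. On the right, $v(yx')=h\cdot v(x')$ and $v(x'h)=v(x')\cdot h$; these two elements of $G$ are distinct because $v(x')\notin C_G(h)=\langle h\rangle$, and neither of them lies in $\langle h\rangle$ (else $v(x')$ would), so Lemma~\ref{l3}(2) places $v(yx'-x'h)$ outside $\langle h\rangle$. This is a contradiction unless both sides vanish. If both sides are $0$, then $x''h=yx''$ in the commutative field $K$ forces $y=h$, and $yx'=x'h$ becomes $hx'=x'h$, so $x'\in C_D(h)=K$, contradicting $v(x')\notin\langle h\rangle$. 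Hence $x'=0$ and $x=x''\in K$, proving self-invariance. The delicate step is the group-theoretic one that pins down $v(x)\in\langle h\rangle$ (in particular ruling out the inversion $v(x)hv(x)^{-1}=h^{-1}$), which is exactly where the freeness of $G$ enters through Lemma~\ref{normalizer}; the remainder is a controlled valuation computation using Lemma~\ref{l3}(2) and the fact that $K$ is the full Laurent-series field generated by the primitive element $h$.
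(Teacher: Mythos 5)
Your proof is correct, and its first half is the paper's argument almost verbatim: reduce to $h=v(\alpha)>1$, conjugate by Cohn's theorem (Lemma~\ref{cohn}), use Lemma~\ref{centralizer} plus maximality to identify $\beta K\beta^{-1}=\Delta((\langle h\rangle))$, and transfer back at the end via Lemma~\ref{mehdi}. Where you diverge is in how the normalizer condition is exploited. The paper decomposes the normalizing element $\gamma=\delta+\epsilon$ (support in $\langle h\rangle$ versus outside), sets $\lambda=\gamma h^\ell\gamma^{-1}$ for \emph{every} integer $\ell$, and runs a three-case comparison of $v(\epsilon h^\ell)$ and $v(\lambda\epsilon)$, extracting $v(\epsilon)\in N_G(\langle h\rangle)$ only from the equality case and then invoking Lemmas~\ref{normalizer} and~\ref{centralizer}. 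You instead apply the valuation morphism $v$ directly to the single element $xhx^{-1}\in K$, which immediately gives $v(x)\in N_G(\langle h\rangle)$, hence $v(x)\in\langle h\rangle$ and $v(xhx^{-1})=h$; after that, one valuation comparison on $x''h-yx''=yx'-x'h$ (via Lemma~\ref{l3}(2)) finishes the job, so the quantification over $\ell$ and the case analysis disappear. You also make explicit that $h$ can be taken primitive, so that $C_G(h)=\langle h\rangle$; this is a worthwhile addition, since the paper's proof of Lemma~\ref{cohn} asserts $C_G(h)=\langle h\rangle$ without the primitive-root caveat (in a free group $C_G(h)$ is generated by the primitive root of $h$), whereas the paper sidesteps primitivity by routing the final commutation through Lemma~\ref{centralizer} inside $D$ rather than through $C_G(h)$. (Two cosmetic remarks: your ``both sides vanish'' case is vacuous, since $v(yx')\neq v(x'h)$ already forces the right-hand side to be nonzero, though your treatment of it is correct; and your appeal to $C_G(h)=\langle h\rangle$ could, if you prefer to stay inside the paper's toolkit, be replaced by the paper's own trick of reading the commutation $v(x)h=hv(x)$ inside $D$ and citing Lemma~\ref{centralizer}.) Net effect: same skeleton and the same three key lemmas, but your version is shorter and slightly more careful on the free-group input, while the paper's version avoids any explicit discussion of primitivity.
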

\begin{proof}
{Assume that $L$ is a maximal subfield of $D$. Since $D$ is non-commutative and $\Delta=Z(D)$ (Lemma~\ref{center}), $L\backslash \Delta\ne\emptyset$. Let $\alpha=\sum\limits_{g\in G}a_gg\in L\backslash \Delta$. Clearly, $\alpha^{-1}$ and $\alpha-a_1\in L\backslash \Delta$. Moreover, if $v(\alpha)\le 1$, then $v(\alpha^{-1})\ge 1$ and if $v(\alpha)=1$, then $v(\alpha-a_1)>1$.  Hence, without loss of generality, we assume that $h=v(\alpha)>1$. According to Lemma~\ref{cohn}, there exists a non-zero element $\beta\in D$ such that $$\alpha_1=\beta\alpha\beta^{-1}=\sum\limits_{i=m}^\infty a_ih^i=a_mh^m+a_{m+1}h^{m+1}+\cdots,$$ where $m\in \mathbb{Z},~a_i\in \Delta$ for every $i\ge m$. Now, we consider the maximal subfield $L_1=\beta L\beta^{-1}$. We claim that $$L_1= \{\sum\limits_{i=m}^\infty a_ih^i \mid m\in \mathbb{Z}, a_i\in \Delta \text{ for every } i\ge m\}.$$ 	One has $C_D(\alpha_1)\supseteq L_1$ since $\alpha_1\in L_1$. By Lemma~\ref{centralizer}, $$C_D(\alpha_1)=\{\sum\limits_{i=n}b_ih^i\mid n\in \mathbb{Z}, b_i\in \Delta\text{ for every } i\ge n \},$$ which is a subfield of $D$. Since the maximality of $L_1$, one has $$L_1=C_D(\alpha_1)= \{\sum\limits_{i=m}^\infty a_ih^i \mid m\in \mathbb{Z}, a_i\in \Delta \text{ for every } i\ge m\}.$$
	
	Now, we claim that $L_1$ is self-invariant. Let $\gamma=\sum\limits_{g\in G}c_gg\in D^*$ such that $\gamma L_1\gamma^{-1}\subseteq L_1$. It suffices to show ${\rm supp}(\gamma)\subseteq \langle h\rangle$. Put $A={\rm supp}(\gamma)$ and assume that $A\backslash \langle h\rangle\ne \emptyset$. Now,
	let $\gamma=\delta+\epsilon,$ where $\delta=\sum\limits_{g\in \langle h\rangle}c_gg$ and $\epsilon=\sum\limits_{g\in A\backslash \langle h\rangle } c_gg$. Clearly, $\delta\in L_1$ and $g_\epsilon=v(\epsilon)\notin \langle h\rangle$. For every integer $\ell$, if $\lambda = \gamma h^\ell\gamma^{-1}=\sum\limits_{i=m}^\infty a_ih^i\in L_1$ with $a_m\ne 0$, then $\epsilon h^\ell-\lambda\epsilon =\lambda\delta-\delta h^\ell\in L_1$. There are three cases:
	\bigskip
	
	{\it Case 1. $v(\epsilon h^\ell)>v(\lambda\epsilon)$.} Then by Lemma~\ref{l3}, $v(\epsilon h^\ell-\lambda\epsilon)=v(\lambda\epsilon)=v(\lambda)v(\epsilon)=h^mg_\epsilon$. Observe that $h^mg_\epsilon\not \in \langle h\rangle$ so $v(\epsilon h^\ell-\lambda\epsilon)\not\in \langle h\rangle,$ which contradicts the fact that $\epsilon h^\ell-\lambda\epsilon\in L_1$.
	\bigskip
	
	{\it Case 2. $v(\epsilon h^\ell)<v(\lambda\epsilon)$.} Similar to Case 1, this implies that $v(\epsilon h^\ell-\lambda\epsilon)=v(\epsilon h^\ell)=g_\epsilon h^\ell\not\in \langle h\rangle,$ which is also a contradiction.
	\bigskip
	
	{\it Case 3. $v(\epsilon h^\ell)=v(\lambda\epsilon)$.} Then, by Lemma~\ref{l3} (1), $g_\epsilon h^\ell =h^m g_\epsilon$. As $\ell$ ranges over the set of integers, $g_\epsilon\in N_G(\langle h\rangle)$. In the view of Lemma~\ref{normalizer}, $g_\epsilon h=hg_\epsilon$. By Lemma~\ref{centralizer}, $g_\epsilon=h^j$ for some integer $j$. This contradicts the fact that $g_\epsilon\not\in \langle h\rangle$.
	\bigskip
	
	Three cases lead us to a contradiction. Thus, $L_1$ is self-invariant, and consequently applying Lemma~\ref{mehdi} we yield $L$ is also self-invariant. The proof is now complete.}
\end{proof}

\section{Subnormal non-normal subgroups of a division ring}

Although this section seems to be far from the main theme of this paper, that is on self-invariant maximal subfields, but we find it convenient to apply Mal'cev-Neumann structure to give negative answer to Conjecture~\ref{c1}. We start with following lemma.

\begin{thm}\label{t5} Let $G$ be a non-cyclic free group with dictionary order, let $\Delta$ be a division ring and let $\sigma : G\to {\rm Aut}(\Delta)$ be a group morphism. For a positive integer $n$, there exists an $n$-subnormal subgroup $N$ of the multiplicative group $D^*$ of $D=\Delta((G,\sigma))$ such that if $M$ is a subgroup of $N$ which is $\ell$-subnormal in $D^*$, then $\ell\ge n$.
\end{thm}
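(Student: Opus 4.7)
The plan is to transfer the subnormal hierarchy from the free group $G$ to $D^*$ via the valuation morphism $v\colon D^*\to G$ of Lemma~\ref{l3}, and then invoke Lemma~\ref{l1}.

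I would fix any $g\in G\setminus\{1\}$ and set $N = v^{-1}(\langle g\rangle_n)$. Since $v$ is a surjective group morphism and each $\langle g\rangle_{i+1}$ is normal in $\langle g\rangle_i$, taking preimages yields a chain of normal subgroups
$$D^* = v^{-1}(\langle g\rangle_0)\trianglerighteq v^{-1}(\langle g\rangle_1)\trianglerighteq\cdots\trianglerighteq v^{-1}(\langle g\rangle_n) = N$$
of length $n$ in $D^*$, so $N$ is at most $n$-subnormal.

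For the lower bound on the subnormality of $N$ and the main claim, I would push any subnormal chain in $D^*$ ending at $M$ forward under $v$. If $M\le N$ is $\ell$-subnormal via $D^*=K_0\trianglerighteq K_1\trianglerighteq\cdots\trianglerighteq K_\ell=M$, then because a surjective group morphism sends a normal subgroup to a normal subgroup of the image,
$$G = v(K_0)\trianglerighteq v(K_1)\trianglerighteq\cdots\trianglerighteq v(K_\ell) = v(M) \subseteq \langle g\rangle_n,$$
so $v(M)$ is at most $\ell$-subnormal in $G$ and lies inside $\langle g\rangle_n$. Provided $v(M)$ is non-trivial, Lemma~\ref{l1} gives $\ell\ge n$. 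Applying this with $M=N$ itself, whose image $\langle g\rangle_n$ is non-trivial because $g\ne 1$, simultaneously rules out any shorter subnormal chain for $N$, pinning its subnormal degree at exactly $n$.

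The key obstacle is guaranteeing $v(M)\ne\{1\}$: the argument collapses for $M\subseteq\ker v$, and indeed $\ker v$ is itself a non-central normal (hence $1$-subnormal) subgroup of $D^*$ contained in $N$, since elements such as $1+h$ with $h>1$ in $G$ are non-central in $D$. This is where the theorem must be read in accordance with the introduction's formulation "non-central $\ell$-subnormal", so that the subgroups $M$ in play do not sit inside $\ker v$; once that restriction is in force the transfer via $v$ reduces the statement cleanly to Lemma~\ref{l1} in the free group~$G$, and the Conjecture~\ref{c1} is refuted by taking $n\ge 2$.
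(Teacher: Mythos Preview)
Your overall strategy---pulling back the chain $\langle g\rangle_i$ along the surjective valuation morphism $v$ of Lemma~\ref{l3} to exhibit $N=v^{-1}(\langle g\rangle_n)$ as subnormal of depth at most $n$, and then pushing an arbitrary subnormal chain for $M\le N$ forward under $v$ to invoke Lemma~\ref{l1}---is exactly the paper's argument, step for step.

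The gap is precisely where you locate it, but your resolution does not close it. You yourself observe that $\ker v$ is \emph{non-central} (via $1+h$ with $h>1$), normal in $D^*$, and contained in $N$. Hence restricting attention to ``non-central $\ell$-subnormal'' subgroups $M$, as in the introduction's phrasing, does \emph{not} ensure $v(M)\ne\{1\}$: taking $M=\ker v$ already violates that. Your concluding sentence, that with this restriction ``the transfer via $v$ reduces the statement cleanly to Lemma~\ref{l1}'', is therefore not justified---your own example defeats the proposed fix.

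For comparison, the paper does not try to exclude this case by hypothesis but attempts to argue it away: assuming $M$ is non-central subnormal with $v(M)$ trivial, it asserts $M\subseteq\Delta$, and then applies the subnormal Cartan--Brauer--Hua theorem \cite[14.3.8]{scott2} to force $\Delta$ (hence $M$) into the center, contradicting non-centrality. This is the step your write-up is missing. Note, however, that your element $1+h\in\ker v\setminus\Delta$ shows the implication ``$v(M)=\{1\}\Rightarrow M\subseteq\Delta$'' used there is itself not valid; so at this particular point both your fix and the paper's argument, as written, leave something to be addressed.
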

\begin{proof} {Let $v : D^* \to G, \alpha\mapsto \min({\rm supp}(\alpha)),$ be the group morphism as in Lemma~\ref{l3}. Assume that $x$ is an element of $G\backslash\{1\}$. Consider the sequence  $$G=\langle x\rangle_0\trianglerighteq \langle x\rangle_1\trianglerighteq \langle x\rangle_2\trianglerighteq\cdots\trianglerighteq \langle x\rangle_n$$ of subgroups as in Lemma~\ref{l1}. Put $$N=N_n=v^{-1}(\langle x\rangle_n)=\{\alpha\in D \mid \min({\rm supp}(\alpha))\in \langle x\rangle_n \}.$$ We show that $N$ is an $n$-subnormal subgroup of $D^*$ and for every $\ell<n$, $N$ contains no non-central $\ell$-subnormal subgroup of $D^*$. Indeed, we first show that $N$ is $n$-subnormal in $D^*$. For every $0\le i<n$, put $$N_i=v^{-1}(\langle x\rangle_i)=\{\alpha\in D^*\mid \min({\rm supp}(\alpha))\in \langle x\rangle_i \}.$$ Then, $$N=N_n\trianglelefteq N_{n-1}\trianglelefteq \cdots \trianglelefteq N_1\trianglelefteq N_0=D^*,$$ which implies that $N$ is subnormal in $D^*$. Moreover, assume that $$N=X_m\trianglelefteq X_{m-1}\trianglelefteq \cdots \trianglelefteq X_1\trianglelefteq X_0=D^*$$ is a sequence of normal subgroups of $D^*$. Then,  $$\langle x\rangle _n=v(N)=v(X_m)\trianglelefteq v(X_{m-1})\trianglelefteq \cdots \trianglelefteq v(X_1)\trianglelefteq v(X_0)=v(D^*)=G$$ is a sequence of normal subgroups of $G$. It implies that $\langle x\rangle_n$ is $m'$-subnormal in $G$ with $m\ge m'$. By Lemma~\ref{l1}, $m'\ge n$, so $m\ge m'\ge n$. Hence, $N$ is $n$-subnormal in $D^*$.
	
	Now we show that for every $\ell<n$, the subgroup $N$ contains no non-central $\ell$-subnormal subgroup of $D^*$. Assume that $M$ is a subgroup of $N$ which is non-central $\ell$-subnormal in $D^*$. Then the image $v(M)=\{v(\alpha) \mid \alpha\in M\}$ is a subgroup of $\langle x\rangle_n$. Moreover, if $v(M)$ is trivial, then $M\subseteq \Delta$, so, obviously, $xKx^{-1}\subseteq \Delta$ for every $x\in M$. In the view of the Cartan-Brauer-Hua Theorem for the subnormal version \cite[14.3.8, Page 439]{scott2}, either $\Delta$ is central or $\Delta=D$. The second case is impossible since $G$ is non-trivial, which implies that $\Delta$ is central. In particular, $M$ is central which contradicts to the hypothesis. Hence, $M\backslash \Delta\ne \emptyset$, which implies that $v(M)$ is non-trivial in $\langle x\rangle_n$. Using same arguments in the previous paragraph, one has $v(M)$ is $\ell'$-subnormal in $G$ with $\ell\ge \ell'$. By Lemma~\ref{l1}, $\ell'\ge n$. Thus, $\ell\ge \ell'\ge n$. The theorem is complete.}
\end{proof}

\section{Division rings with finite dimensional subdivision rings}

This section relates to the Kurosh problem for division rings which conjectures that algebraic division rings are locally finite, that is, every finitely generated subdivision ring is finite dimensional over its center (e.g., see \cite[Problem 7]{Pa_Ze_07}). In \cite{Pa_Ma_00}, Mahdavi-Hezavehi proposed a weak version of the Kurosh problem as follow:

\begin{con}\label{c4.2} {\rm \cite[Problem 9, P. 82]{Pa_Ma_00}} Every non-commutative division ring algebraic over its center contains a centrally finite non-commutative subdivision ring
\end{con}

There is a vast number of applications in case this conjecture holds. For example, the answer to \cite[P. 180]{her1} and \cite[Conjecture 1.1]{Pa_Go_Pa_15} is affirmative naturally for the class of algebraic division rings.

Although Conjecture~\ref{c4.2} is true for many cases such as when $D$ is right (or left) algebraic bounded degree over a maximal subfield $K$ \cite{aa2,Pa_BeDrSh_13} or $D$ contains a non-central torsion elements \cite[Proposition 2.2]{mai}, it is still open in general. The aim of this section is to give affirmative answer to Conjecture~\ref{c4.2} in other cases.
\begin{lem}\label{l4.1}
	Let $D$ be a division ring with center $F$. Assume that $K$ is a subfield of $D$ and $x$ is an element in $D$ which is left algebraic over $K$ and $xKx^{-1}\subseteq K$. If $L=K+Kx+ \dots +Kx^n+\dots$ is the left vector subspace of $D$ over $K$ generated by $\{x^n\mid n\in \mathbb{N} \}$, then $L$ is a finite dimensional subdivision ring of $D$.
\end{lem}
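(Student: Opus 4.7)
The plan is to check in turn that $L$ is a subring of $D$, that $L$ is finite-dimensional as a left $K$-vector space, and finally that every nonzero element of $L$ is invertible inside $L$. The first point rests on the hypothesis $xKx^{-1}\subseteq K$: the assignment $\sigma:K\to K$, $k\mapsto xkx^{-1}$, is a (necessarily injective) ring homomorphism of the field $K$, so a direct computation yields $(ax^i)(bx^j)=a\sigma^i(b)x^{i+j}\in Kx^{i+j}$, and $L$ is multiplicatively closed. Closure under addition and the containment $1\in K\subseteq L$ are immediate from the definition.

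For finite dimensionality I would use the left algebraicity of $x$ over $K$ to pick a relation $a_nx^n+\cdots+a_1x+a_0=0$ of minimal degree $n$ with $a_n\ne 0$, and then left-multiply by $a_n^{-1}$ to write $x^n=c_{n-1}x^{n-1}+\cdots+c_1x+c_0$ with $c_i\in K$. A quick induction on $m\ge 0$ then shows $x^{n+m}\in K+Kx+\cdots+Kx^{n-1}$: if $x^{n+m-1}=\sum_{i=0}^{n-1}c_i'x^i$ with $c_i'\in K$, then
\[
x^{n+m}=\sum_{i=0}^{n-1}xc_i'x^i=\sum_{i=0}^{n-1}\sigma(c_i')x^{i+1},
\]
which is a left $K$-combination of $x,x^2,\ldots,x^n$; reducing the $x^n$ summand using the base case closes the step. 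Hence $L=K+Kx+\cdots+Kx^{n-1}$ and $\dim_K L\le n$.

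To see that $L$ is in fact a division ring I would use the key observation that although left multiplication on $L$ by an element of $L$ is generally not left $K$-linear, right multiplication is: for any $y\in L$, the map $R_y:L\to L$, $z\mapsto zy$, satisfies $R_y(kz)=(kz)y=k(zy)=kR_y(z)$ for all $k\in K$. Since $D$ is a division ring, $R_y$ is injective whenever $y\ne 0$, and finite left $K$-dimension then forces it to be surjective, so there exists $z\in L$ with $zy=1$. Uniqueness of inverses in the ambient division ring $D$ yields $y^{-1}=z\in L$, completing the argument.

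The only real subtlety is the careful book-keeping of non-commutativity. One has to move scalars across powers of $x$ using $\sigma$ in the first two steps, and, more importantly, one has to pick the correct side in the last step so that the dimension argument applies to a bona fide map of left $K$-modules rather than to something only additive; using left multiplication by $y$ instead would break $K$-linearity and the whole inversion argument.
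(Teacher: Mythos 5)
Your treatment of the subring structure and of $\dim_K L\le n$ matches the paper's (the paper computes $(kx^i)(\ell x^j)=k(x^i\ell x^{-i})x^{i+j}$ and simply asserts $L=K+Kx+\dots+Kx^{n-1}$; you supply the induction explicitly). For invertibility you take a genuinely different and arguably cleaner route: the paper notes that, since $\dim_KL=n<\infty$, the powers of a nonzero $a\in L$ are left linearly \emph{dependent} over $K$ (the word ``independent'' in the paper is a typo), so $a$ is left algebraic over $K$, and it then manipulates the minimal polynomial $a_0+a_1a+\dots+a_ma^m=0$ to exhibit $a^{-1}=a_0^{-1}(-a_1-a_2a-\dots-a_ma^{m-1})\in L$ explicitly; you instead observe that right multiplication $R_y$ is an injective left $K$-linear endomorphism of the finite dimensional left $K$-space $L$, hence surjective, so $y^{-1}\in L$. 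Both arguments are correct; yours avoids minimal polynomials altogether, while the paper's produces the inverse explicitly, and your remark about choosing right (not left) multiplication to preserve left $K$-linearity is exactly the point.

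There is, however, one part of the paper's conclusion that your proof does not reach. As the lemma is used later (``By Lemma~\ref{l4.1}, $L$ is a centrally finite subdivision ring of $D$''), ``finite dimensional'' means finite dimensional over the \emph{center} of $L$, not merely over $K$; and $K$ need not be central in $L$ (in both applications $L$ is non-commutative and $K$ is not centralized by $x$). Passing from ``$L$ is a division ring with $\dim_KL<\infty$ as a left $K$-vector space'' to ``$L$ is centrally finite'' is a genuine additional step, which the paper settles by invoking \cite[Lemma 6]{Akbari}. You should append this step (or that citation) to your argument; everything before it stands.
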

\begin{proof}{ If $n$ is the degree of the minimal polynomial of $x,$ then it is clear that $L = K + Kx + \dots + Kx^{n-1}$. Therefore, one can verify that, as a left vector space, $L$ is finite dimensional over $K.$ Now, we show that $L$ is indeed a centrally finite division subring of $D$. For every $k, l\in K$, one has $(kx^i)(\ell x^j) = kx^i\ell x^j = k(x^i\ell x^{-i})x^{i+j}\in Kx^{i+j}$. Hence, $L$ is closed under multiplication, which implies that $L$ is a subring of $D$.  Let $a\in L\backslash \{0\}$. Since $\dim_KL=n$, the subset $\{a^i\mid i\in \mathbb{N}\}$ is left linearly independent over $K$, which implies that $a$ is left algebraic over $K$. Let $a_0+a_1t+\dots +a_mt^m\in K[t]$ be the minimal polynomial of $a$. Then, $a_0+a_1a+\dots +a_ma^m=0$. Observe that $a_0\ne 0$, we have $$1=a a_0^{-1}(-a_{m}a^{m-1}-\dots -a_1)=a_0^{-1}(-a_{m}a^{m-1}-\dots -a_1)a.$$ As a corollary,  $a^{-1}$ belongs to $K+Ka+Ka^2+\dots \subseteq L$, so $L$ is a subdivision ring of $D$. Finally, since $L$ is finite dimensional over $K$ as a left vector space, by \cite[Lemma 6]{Akbari}, $L$ is centrally finite. The proof is complete.}	
\end{proof}

\begin{thm} Let $D$ be a division ring algebraic over its center $F$. One of the following assertions holds:
	\begin{enumerate}
		\item Every maximal subfield of $D$ is self-invariant.
		\item $D$ contains a centrally finite non-commutative subdivision ring.
	\end{enumerate}
\end{thm}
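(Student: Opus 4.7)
The plan is to prove the dichotomy in the form ``if (1) fails then (2) holds'', so I assume there is a maximal subfield $K$ of $D$ that is not self-invariant. By definition this means $N_{D^*}(K^*)\cup\{0\}\ne K$; since the inclusion $K\subseteq N_{D^*}(K^*)\cup\{0\}$ is automatic (because $K$ is a field, $K^*$ normalizes itself), strict non-equality yields some $x\in D^*\setminus K$ with $xK^*x^{-1}\subseteq K^*$, equivalently $xKx^{-1}\subseteq K$.

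Next I would observe that $D$ is algebraic over $F\subseteq K$, so in particular $x$ is left algebraic over $K$. Together with $xKx^{-1}\subseteq K$, this places us exactly in the hypotheses of Lemma~\ref{l4.1}. I would invoke that lemma directly to obtain the left $K$-span $L=K+Kx+Kx^2+\cdots$ as a centrally finite subdivision ring of $D$ containing $x$. Because $x\notin K$, one has $L\supsetneq K$, so $L$ is the candidate for the subdivision ring required in (2).

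The final step is to verify that $L$ is non-commutative. Here I would use the defining property $C_D(K)=K$ of a maximal subfield: if $L$ were commutative, then every element of $L$ would centralize every element of $K$, forcing $L\subseteq C_D(K)=K$, which contradicts $L\supsetneq K$. Hence $L$ is a centrally finite non-commutative subdivision ring of $D$, and alternative (2) holds.

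I do not expect any serious obstacle: the statement essentially repackages Lemma~\ref{l4.1} once one extracts the normalizing element $x$ from the failure of self-invariance, and the only additional observation needed is the one-line maximality argument ruling out commutativity of $L$. The algebraicity of $D$ over $F$ enters only to guarantee that the normalizing element $x$ satisfies the ``left algebraic over $K$'' hypothesis of Lemma~\ref{l4.1}, which is automatic in our setting.
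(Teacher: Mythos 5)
Your proposal is correct and follows essentially the same route as the paper: extract a normalizing element $x\notin K$ from the failure of self-invariance, note that algebraicity over $F$ makes $x$ left algebraic over $K$, apply Lemma~\ref{l4.1} to get the centrally finite subdivision ring $L=K+Kx+Kx^2+\cdots$, and use maximality of $K$ (via $C_D(K)=K$) to rule out commutativity of $L$. Your explicit one-line justification of non-commutativity is exactly what the paper leaves implicit.
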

\begin{proof}
	{Assume that there exists a maximal subfield $K$ of $D$ and $x\in D\setminus K$ such that $xKx^{-1}\subseteq K$. Put $L=K+Kx+Kx^2+\cdots $, the vector subspace of the left vector space $D$ generated by $\{x^i\mid i\in \mathbb{N}\}$ over $K$. Observe that $x$ is algebraic over $F$, so $x$ is left algebraic over $K$. By Lemma~\ref{l4.1}, $L$ is a centrally finite subdivision ring of $D$. Since $K$ is a maximal subfield of $D$ and $x\in L\backslash K$, the division ring $L$ is non-commutative. The proof is complete.}
\end{proof}

\begin{thm}\label{thm2}
	Let $D$ be an algebraic division ring. If $D^*$ contains a non-abelian solvable subgroup, then $D$ contains a centrally finite non-commutative subdivision ring.
\end{thm}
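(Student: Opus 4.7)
The plan is to apply Lemma~\ref{l4.1}: I want to produce a subfield $K \subseteq D$ together with an element $x \in D^* \setminus K$ satisfying $xKx^{-1} \subseteq K$, so that $L = K + Kx + Kx^2 + \cdots$ becomes a centrally finite subdivision ring of $D$, and then verify that this $L$ is non-commutative. The left-algebraicity hypothesis of Lemma~\ref{l4.1} is automatic because $D$ is algebraic over $F$; all the work lies in extracting $(K, x)$ from the non-abelian solvable subgroup $G \leq D^*$.

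Let $n$ denote the derived length of $G$. Since $G$ is non-abelian we have $n \geq 2$, and the last non-trivial derived subgroup $M = G^{(n-1)}$ is abelian and characteristic in $G$. The proof splits on whether $M$ lies inside $Z(G)$. If $M \not\subseteq Z(G)$, I would take $K = F(M)$ (a subfield of $D$ because $M$ is abelian and each of its elements is algebraic over $F$, so the commutative algebraic $F$-algebra $F[M]$ is a field) and pick any $g \in G$ that does not centralize $M$; normality of $M$ in $G$ gives $gKg^{-1} = K$, while $g \notin K$ holds because $g$ fails to commute with some element of $M \subseteq K$, whereas every element of the commutative field $K$ centralizes $K$. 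Lemma~\ref{l4.1} then yields the desired $L$, and $L$ is non-commutative because it contains $g$ together with a witness element of $M$ not commuting with $g$.

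If instead $M \subseteq Z(G)$, one descent step is needed: $G^{(n-2)}$ is non-abelian by minimality of $n$ and satisfies $[G^{(n-2)}, G^{(n-2)}] = M \subseteq Z(G)$, so I pick $g_1, g_2 \in G^{(n-2)}$ with $z := [g_1, g_2] \neq 1$ and note that $z \in Z(G)$ commutes with both $g_1$ and $g_2$. Setting $K = F(g_1, z)$ yields a subfield of $D$, and the relation $g_1 g_2 = z g_2 g_1$ together with centrality of $z$ gives $g_2 g_1 g_2^{-1} = z^{-1} g_1 \in K$ and $g_2 z g_2^{-1} = z \in K$, so $g_2$ normalizes $K$; moreover $g_2 \notin K$, for otherwise $g_1$ and $g_2$ would commute in the field $K$, contradicting $z \neq 1$. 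Applying Lemma~\ref{l4.1} with $x = g_2$ now finishes the case, $L$ being non-commutative since $g_1, g_2 \in L$ fail to commute.

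The main obstacle is that the obvious choice $K = F(M)$ collapses in the second case because $M$ is central and would produce a commutative $L$. Recognising that $K$ must be enlarged by a non-central element $g_1$ drawn from $G^{(n-2)}$, and that normalisation of $K$ by $g_2$ survives precisely because $[g_1, g_2]$ lies in $Z(G)$ (so conjugation moves $g_1$ only by the central scalar $z^{-1}$), is the structural heart of the argument and is what forces the descent into the derived series.
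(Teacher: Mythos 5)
Your proof is correct and follows essentially the same strategy as the paper: both arguments feed Lemma~\ref{l4.1} a subfield generated by commuting, algebraic elements of the solvable group together with a group element that normalizes it but does not centralize it, and conclude non-commutativity of the resulting centrally finite subdivision ring $L$ from that failure of commutation. The only real difference is how the pair $(K,x)$ is located: the paper first reduces to the metabelian case and uses Zorn's lemma to choose a maximal abelian subgroup $A\supseteq H'$, setting $K=F(A)$ and taking any $x\in H\setminus A$, whereas you case-split on whether $G^{(n-1)}$ is central and, in the central case, use the explicit relation $g_2g_1g_2^{-1}=z^{-1}g_1$ with $z=[g_1,g_2]\in Z(G)$, which avoids Zorn's lemma but at the cost of the extra case analysis.
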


\begin{proof}{ Let $F$ be the center of $D$. Suppose that $H$ is a non-abelian subgroup of $D^*$ such that
	$$H=H^{(0)} \rhd H^{(1)} \rhd H^{(2)} \rhd \cdots \rhd H^{(n-1)} \rhd H^{(n)}=1.$$
	Then, $n \ge 2$ as $H$ is non-abelian. Without loss of generality, we assume that $n=2$, that is, $H$ is metabelian. Let $S$ be the set of abelian subgroups of $H$ containing the derived subgroup $H'$. It is trivial that $S \ne \emptyset$. Assume that $$H_1 \le H_2 \le \cdots \le H_n \le \cdots$$ is a chain in $S$. If $A=\bigcup\limits_{i \in \mathbb{N}} H_i$, then $A\in S$, so $S$ satisfies Zorn's Lemma. Let $A$ be a maximal element in $S$.  Put $K=F(A)$, then $K$ is a subfield of $D$ because $A$ is abelian. Then, observe that $H$ is non-abelian, so $H\backslash K$ is non-empty. Let $x\in H\backslash A$. Put $L=K+Kx+Kx^2 + \cdots + Kx^n+ \dots$. We claim that $L$ is a centrally finite non-commutative subdivision ring of $D$.  Since $A$ contains $H'$, one has that $A$ is normal in $H$, which implies that $xAx^{-1}\subseteq A$. Hence, $xKx^{-1} \subseteq K$. By Lemma~\ref{l4.1}, $L$ is a centrally finite subdivision ring of $D$. Moreover, as $A$ is a maximal abelian subgroup of $H$ and $x\in H\backslash A$, one has $x\not\in C_H(A)$. which implies that $L$ is non-commutative.
	The proof is complete.} 		
\end{proof}

We give affirmative answer to \cite[Problem 9, P. 82]{Pa_Ma_00} in case $D$ contains a maximal subfield $K$ which is algebraic of bounded degree over $F.$
\begin{lem}\label{l1.1}{\rm \cite{Pa_AmRo_94}}
Let $D$ be a division ring with center $F$. If $a\in D$ is an inseparable algebraic element over $F$, then there exists $b\in D$ such that $ab-ba=1$.
\end{lem}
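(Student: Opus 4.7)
The plan is to exploit inseparability in characteristic $p$: it renders the inner derivation $\delta := \mathrm{ad}(a) = L_a - R_a$ nilpotent after passing to a suitable subring, after which a Jordan-block type argument produces $b$ as an explicit product of two elements of $D$. Since $a$ is inseparable algebraic, $\mathrm{char}(F) = p > 0$ and the minimal polynomial of $a$ has the shape $h(x^{p^e})$ with $h \in F[x]$ separable and $e \ge 1$; the element $a_0 := a^{p^e}$ is then separable over $F$.

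The first step is a reduction to the purely inseparable situation $a^{p^e} \in F$. If $a_0 \in F$ already, there is nothing to do. Otherwise, the plan is to pass to the division subring $D_0 := C_D(a_0)$, whose centre $F_0$ contains $F(a_0)$, and to work with the triple $(D_0, F_0, a)$ in place of $(D, F, a)$: one has $a^{p^e} = a_0 \in F_0$ automatically, and $a$ is inseparable over $F_0$ provided $a \notin F_0$. Verifying $a \notin F_0$ is the main obstacle; in the centrally finite setting it is a quick consequence of the double centraliser theorem combined with the separability of $a_0$ against the inseparability of $a$, but in the general (possibly infinite-dimensional) setting a finer argument, likely based on reducing to a centrally finite subring generated by finitely many witnesses, will be required.

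Having reduced to $a^{p^e} \in F$ with $a \notin F$, the commuting operators $L_a, R_a$ satisfy, by the Frobenius identity in characteristic $p$,
\[
\delta^{p^e} = (L_a - R_a)^{p^e} = L_a^{p^e} - R_a^{p^e} = L_{a^{p^e}} - R_{a^{p^e}} = 0,
\]
so $\delta$ is a nilpotent $F$-linear derivation of $D$. Let $n$ be the smallest integer with $\delta^n = 0$; the bound $n \ge 2$ uses that $\delta \ne 0$, because otherwise $a$ would lie in the centre. I would then pick $u \in D$ with $\delta^{n-1}(u) \ne 0$ and set $w = \delta^{n-2}(u)$ and $v = \delta(w) = \delta^{n-1}(u)$. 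Since $\delta(v) = \delta^n(u) = 0$, the element $v$ is a nonzero member of the division ring $\ker \delta = C_D(a)$, so $v^{-1}$ also centralises $a$, giving $\delta(v^{-1}) = 0$. Setting $b := v^{-1} w$ and applying the Leibniz rule,
\[
ab - ba = \delta(b) = \delta(v^{-1})\,w + v^{-1}\,\delta(w) = 0 + v^{-1} v = 1,
\]
as required. This second step is purely formal once the reduction is in hand; the whole argument therefore rests on carrying out that reduction carefully without any centrally finite assumption.
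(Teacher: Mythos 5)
First, a remark on the comparison you asked for: the paper does not prove this lemma at all --- it is quoted from Amitsur--Rowen --- so your argument has to stand entirely on its own. Its second half does: once you are in the situation $a^{p^e}\in F_0$, $a\notin F_0$, the identity $\delta^{p^e}=(L_a-R_a)^{p^e}=L_{a^{p^e}}-R_{a^{p^e}}=0$, the choice $w=\delta^{n-2}(u)$, $v=\delta^{n-1}(u)$ and $b=v^{-1}w$ correctly give $ab-ba=1$, and this disposes of the purely inseparable case $a^{p^e}\in F$ completely. The genuine gap is the reduction step you explicitly leave open: showing $a\notin F_0=Z\bigl(C_D(a_0)\bigr)$ when $a_0=a^{p^e}\notin F$. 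This is not a removable technicality; it is \emph{equivalent} to the lemma. Indeed, $D$ is a module over the Artinian commutative ring $F(a)\otimes_F F(a)\cong F(a)[x]/\bigl(g(x^{p^e})\bigr)$, in which $\mathrm{ad}(a)$ acts as $a-x$; since $g$ is separable, this ring splits as $F(a)[x]/\bigl((x-a)^{p^e}\bigr)\times B'$ with $x-a$ invertible in $B'$, giving an $\mathrm{ad}(a)$-stable decomposition $D=C_D(a_0)\oplus D'$ (the first summand is $\ker \mathrm{ad}(a)^{p^e}=\ker\mathrm{ad}(a_0)$) with $\mathrm{ad}(a)$ invertible on $D'$ and $1\in C_D(a_0)$. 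Hence any $b$ with $ab-ba=1$ can be replaced by its component in $C_D(a_0)$, so a solution exists if and only if $\mathrm{ad}(a)$ does not vanish on $C_D(a_0)$, i.e.\ if and only if $a\notin Z\bigl(C_D(a_0)\bigr)$. In the case $a_0\notin F$, the claim you defer therefore carries the entire content of the statement.

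Moreover, the repair you sketch --- ``reducing to a centrally finite subring generated by finitely many witnesses'' --- is not available: a division subring generated by finitely many elements of $D$ need not be centrally finite (that is precisely the Kurosh problem, discussed as open in Section 4 of this very paper), and passing to any division subring enlarges the centre, so the inseparability of $a$ over the new centre, which your double-centraliser argument requires, may be lost. What is needed is a substitute for the double centraliser theorem valid without dimension restrictions. One workable route: extend scalars to a splitting field $E$ of the separable minimal polynomial $g$ of $a_0$ (centralizers commute with $-\otimes_F E$, and $D\otimes_F E$ is simple with centre $E$); the idempotents $e_1,\dots,e_d$ of $F(a_0)\otimes_F E\cong E^d$ give $C_{D\otimes_F E}(a_0)=\bigoplus_i e_i(D\otimes_F E)e_i$, and since each $e_i$ is a full idempotent of a simple ring, $Z\bigl(e_i(D\otimes_F E)e_i\bigr)=Ee_i$; so if $a$ were central in $C_D(a_0)$, then $a\otimes 1$ would lie in $F(a_0)\otimes_F E$, forcing $a\in F(a_0)$ and contradicting $[F(a):F(a_0)]=p^e>1$. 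Until an argument of this kind is supplied, your proposal proves only the purely inseparable case, not the lemma as stated.
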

\begin{thm}
Let $D$ be a non-commutative division ring algebraic over its center $F$. If $D$ contains a maximal subfield $K$  algebraic of bounded degree over $F,$ then $D$ contains  a non-commutative subdivision ring which is finite-dimensional over $F.$
\end{thm}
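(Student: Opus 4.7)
The plan is to split on whether $K/F$ is separable. In the separable case I will combine the primitive element theorem with the bounded-degree hypothesis to force $[K:F]<\infty$, and then invoke Theorem~\ref{dim}. In the inseparable case I will use Lemma~\ref{l1.1} to produce a ``Weyl-type'' pair $a,b$ and then build an explicit finite-dimensional non-commutative subalgebra of $D$ out of them.

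\textbf{Separable case.} Suppose every $\alpha\in K$ is separable over $F$, with common degree bound $n$. Any finite subextension of $K/F$ is simple by the primitive element theorem, hence of degree $\le n$. Pick $\alpha_0\in K$ of maximal degree $d\le n$; for any $\beta\in K$, write $F(\alpha_0,\beta)=F(\gamma)$ with $[F(\gamma):F]\le n$. Since $F(\gamma)\supseteq F(\alpha_0)$ and $d$ is maximal, $F(\gamma)=F(\alpha_0)$, so $\beta\in F(\alpha_0)$. Hence $K=F(\alpha_0)$ is finite over $F$, and Theorem~\ref{dim} gives $[D:F]<\infty$. Since $D$ is non-commutative, $D$ itself is the desired subdivision ring.

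\textbf{Inseparable case.} Otherwise $K$ contains an element $a$ inseparable over $F$, so ${\rm char}(F)=p>0$. Lemma~\ref{l1.1} produces $b\in D$ with $ab-ba=1$. A short induction on $n$ yields $[a^n,b]=na^{n-1}$; specializing to $n=p$ gives $[a^p,b]=0$, so $a^p$ commutes with both $a$ and $b$, and hence the field $F':=F(a^p)$ lies in the center of the $F$-subalgebra $R:=F\langle a,b\rangle$. Since $D/F$ is algebraic, $b$ admits a minimal polynomial $h'\in F'[t]$ over $F'$ of some finite degree $d'$.

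\textbf{Conclusion.} I would then show that $R$ equals the $F'$-linear span of the finite set $\{a^ib^j:0\le i<p,\ 0\le j<d'\}$. The three rewriting rules $ba=ab-1$ (which iterates to $b^ja=ab^j-jb^{j-1}$), $a^p\in F'$, and $h'(b)=0$ reduce every word in $a,b$ to an $F'$-combination of such monomials, and a direct check confirms that this span is multiplicatively closed. Thus $\dim_{F'}R\le pd'$, and combined with $[F':F]\le[F(a):F]<\infty$ we obtain $\dim_FR<\infty$. As a finite-dimensional integral domain over a field, $R$ is automatically a division ring; it is non-commutative because $ab-ba=1\ne 0$. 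The main obstacle I anticipate is the closure verification for the spanning set, which hinges on the PBW-type identity $b^ja^k=a^kb^j+\text{(lower-degree terms)}$ obtained by iterating $[a,b]=1$, after which the reductions by $a^p\in F'$ and $h'(b)=0$ keep everything inside the spanning range.
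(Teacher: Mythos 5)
Your proof is correct and takes essentially the same route as the paper: the same dichotomy (an inseparable element of $K$ versus all elements of $K$ separable over $F$), with Lemma~\ref{l1.1} plus a rewriting/spanning argument producing a finite-dimensional non-commutative subalgebra generated by $a,b$ in the first case, and the primitive-element/maximal-degree argument forcing $[K:F]<\infty$ and hence $[D:F]<\infty$ via Theorem~\ref{dim} in the second. Your detour through the central subfield $F(a^p)$ and the explicit ``finite-dimensional domain is a division ring'' step merely spell out in more detail what the paper asserts directly for the subdivision ring $F(a,b)$.
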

\begin{proof}{There are two cases: (1) There exists $a\in K\backslash F$ which is inseparable over $F$; and (2) Every element in $K\backslash F$ is separable over $F$.
		
	{\bf Case 1.} Assume that there exists $a\in K\backslash F$ that is inseparable over $F$. By Lemma~\ref{l1.1}, there exists $b\in D$ such that $ab-ba=1$. Let $D_1=F(a,b)$ be the subdivision ring of $D$ generated by $a,b$ over $F$. It is clear that $D_1$ is non-commutative. Observe that $a,b$ are algebraic over $F$ and $ba=1-ab$, so that every element $x$ of $D_1$ can be written in the form of $$x=\sum_{i=1}^t\alpha_ia^{n_i}b^{m_i},$$ where $n_i,m_i\le 0, \alpha\in F$. It implies that $D_1$ is finite-dimensional over $F$.
	
	{\bf Case 2.} Assume every element in $K\backslash F$ is separable over $F$. Let $n={\rm degmax}_F(K)$ and $a\in K\backslash F$ such that $\deg_F(a)=n$.
	In this case, we will show that $D$ is centrally finite. Indeed, we first claim that $F(a)=K$.  If $K\backslash F(a)\ne \emptyset$, then fix an element $b\in K$ such that $b\notin F(a)$ and consider subfield $F(a,b)$. Then, $[F(a,b):F]=[F(a,b):F(a)][F(a):F]>n$. Since $a,b$ are separable, $F(a,b)$ is a simple extension of $F$ (\cite[Theorem 9.18*]{Bo_Hu_12}). Therefore, there exists $c\in F(a,b)$ such that $F(a,b)=F(c)$. Note that $[F(c):F]=[F(a,b):F]>n$, so $\deg_F(c)>n$, which is a contradiction! The claim is shown: $F(a)=K$. As a corollary, ${\rm dim}_FK=n<\infty$. Hence, $D$ is centrally finite.} 	
\end{proof}

Using the same argument in Case 2 of the previous theorem, we can show the following result.

\begin{thm}
Let $D$ be a division ring with center $F$. Assume that $D$ contains a maximal subfield $K$ which is algebraic of bounded degree over $F.$ If ${\rm Char}(F)$ is not a divisor of ${\rm degmax}_F(K),$ then $D$ is centrally finite. In particular, if ${\rm Char}(F)=0,$ then $D$ is centrally finite.
\end{thm}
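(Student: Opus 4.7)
The plan is to reduce to Case 2 of the previous theorem's proof by establishing that, under the hypothesis $p := {\rm Char}(F) \nmid n$, every element of $K$ is separable over $F$. Once this is known, the argument of Case 2 applies verbatim and yields an element $a \in K$ with $\deg_F(a) = n$ and $F(a) = K$, so that $\dim_F K = n < \infty$; Theorem~\ref{dim} then gives $\dim_F D = n^2$, i.e., $D$ is centrally finite. The characteristic zero statement is immediate, since algebraic extensions in characteristic zero are automatically separable, so I focus on the case $p > 0$.

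The first elementary ingredient is that any inseparable algebraic element $x$ over $F$ has minimal polynomial of the form $f(X^{p^k})$ with $f \in F[X]$ separable irreducible and $k \geq 1$, so that $\deg_F(x) = p^k \deg f$ is divisible by $p$. Consequently, an element $c \in K$ with $\deg_F(c) = n$ must be separable over $F$, for otherwise $p \mid n$. Letting $K_s$ denote the subfield of $K$ formed by elements separable over $F$, every finitely generated sub-extension of $K_s/F$ is finite separable, hence simple, with degree bounded by $n$; a standard chain argument yields $[K_s:F] \leq n$, and since $c \in K_s$ already has degree $n$, we conclude $K_s = F(c)$ with $[K_s : F] = n$.

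It remains to prove $K = K_s$. Suppose for contradiction $K \supsetneq K_s$ and pick $b \in K \setminus K_s$ with $b^p \in K_s$; such a $b$ exists by starting with any $b_0 \in K \setminus K_s$, taking the minimal $j$ with $b_0^{p^j}$ separable over $F$ (which exists since $b_0$ is algebraic), and setting $b := b_0^{p^{j-1}}$. Consider $y_t := c + tb \in K$ for $t \in F^*$; then $y_t^p = c^p + t^p b^p$ lies in $F(c^p, b^p) = F(c)$, using $F(c^p) = F(c)$ (by separability of $c$) together with $b^p \in K_s = F(c)$. Applying the primitive element theorem to the separable pair $(c^p, b^p)$ over $F$, one has $F(c^p + u b^p) = F(c^p, b^p) = F(c)$ for all but finitely many $u \in F$; if $F$ is infinite, then $F^p$ is infinite too (Frobenius being injective on $F$), so I can select $t \in F^*$ with $u = t^p$ in this good set, forcing $\deg_F(y_t^p) = n$. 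If $y_t$ lay in $F(c) = F(y_t^p)$, then $b = t^{-1}(y_t - c) \in F(c) = K_s$ would contradict $b \notin K_s$; hence $y_t \notin F(y_t^p)$, so $y_t$ is inseparable over $F$ with $\deg_F(y_t) = p \cdot \deg_F(y_t^p) = np > n$, contradicting ${\rm degmax}_F(K) = n$. When $F$ is finite it is perfect, so every algebraic extension of $F$ is automatically separable and $K = K_s$ holds for free. In either case Case 2 applies, and $D$ is centrally finite.

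The main obstacle is the last step of the third paragraph: one must arrange the $p$-th power $t^p$ to land in the primitive element theorem's cofinite good set, which is handled cleanly by the infinitude of $F^p$ when $F$ is infinite and by perfection when $F$ is finite. All remaining reductions are essentially bookkeeping once the divisibility observation $p \mid \deg_F(x)$ for inseparable $x$ is in hand.
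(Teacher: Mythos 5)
Your proof is correct, but it takes a genuinely different route from the paper's. Both arguments begin with the same key observation: an inseparable element has degree divisible by $p={\rm Char}(F)$, so the element $a\in K$ attaining $\deg_F(a)=n={\rm degmax}_F(K)$ is separable when $p\nmid n$. The paper then simply re-runs the Case 2 argument of the preceding theorem with this single separable element $a$: if $b\in K\setminus F(a)$, then $F(a,b)=F(b,a)$ is a simple extension because all but one generator is separable, and its primitive element would have degree exceeding $n$. (Strictly speaking this requires the strong form of the primitive element theorem, since $b$ is not known to be separable --- a point the paper glosses over by saying ``the same argument.'') You instead close exactly that gap head-on: you prove that $K$ contains no inseparable elements at all, by producing from any hypothetical $b\in K\setminus K_s$ with $b^p\in K_s=F(c)$ an element $y_t=c+tb$ whose $p$-th power $c^p+t^pb^p$ is (for a suitable $t$, via the primitive element theorem applied to the separable pair and the infinitude of $F^p$) a generator of $F(c)$, forcing $\deg_F(y_t)=np>n$. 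Your version is longer but more self-contained: it only ever invokes the primitive element theorem for separable generators, it handles the finite-field case separately via perfection, and it yields the stronger intermediate conclusion that $K/F$ is separable, after which Case 2 applies verbatim. The paper's version is a two-line reduction that is slicker but leans on the stronger (one-inseparable-generator) primitive element theorem without saying so.
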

\begin{proof}{Let $a$ be an element of $K$ such that $\deg_F(a)=n.$ Since $p$ is not a divisor of ${\rm Char}(F),$ the element $a$ is separable over $F$. Now using the same argument in Case 2 of the previous theorem, we have $D$ is centrally finite.}
\end{proof}

\end{document}